\newtheorem{Theorem}{Theorem}[section]
\newtheorem{Corollary}[Theorem]{Corollary}
\newtheorem{Observation}[Theorem]{Observation}
\definecolor{Blue}{rgb}{0,0,1}
\definecolor{Red}{rgb}{1,0,0}
\long\def\delete#1{}
\newcommand{\be}{\begin{equation}}
\newcommand{\ee}{\end{equation}}
\newcommand{\bea}{\begin{eqnarray}}
\newcommand{\eea}{\end{eqnarray}}
\newcommand{\bean}{\begin{eqnarray*}}
\newcommand{\eean}{\end{eqnarray*}}
\def\({\left(}
\def\){\right)}
\def\[{\left[}
\def\]{\right]}
\begin{document}

\title{Forbidden induced subgraphs in iterative higher order line graphs}
\author{\textbf{Aryan Sanghi} \\ Indian Institute of Technology, Kharagpur - 721302, West Bengal (India) \\ E-mail : aryansanghi2004@gmail.com \\ \\
\textbf{Devsi Bantva} \\ Lukhdhirji Engineering College, Morvi - 363 642, Gujarat (India) \\ E-mail : devsi.bantva@gmail.com \\ \\
\textbf{Sudebkumar Prasant Pal}\footnote{Corresponding author.} \\ Indian Institute of Technology, Kharagpur - 721302, West Bengal (India) \\ E-mail : sudebkumar@gmail.com}

\pagestyle{myheadings}
\markboth{\centerline{Aryan Sanghi, Devsi Bantva and Sudebkumar Prasant Pal}}{\centerline{Forbidden induced subgraphs in iterative higher order line graphs}}

\date{}
\openup 0.8\jot
\maketitle

\begin{abstract}
Let $G$ be a simple finite connected graph. The line graph $L(G)$ of graph $G$ is the graph whose vertices are the edges of $G$, where $ef \in E(L(G))$ when $e \cap f	\neq \emptyset$. Iteratively, the higher order line graphs are defined inductively as $L^1(G) = L(G)$ and $L^n(G) = L(L^{n-1}(G))$ for $n \geq 2$. In \cite{Beineke}, Beineke characterize line graphs in terms of nine forbidden subgraphs. Inspired by this result, in this paper, we characterize second order line graphs in terms of pure forbidden induced subgraphs. We also give a sufficient list of forbidden subgraphs for a graph $G$ such that $G$ is a higher order line graph. We characterize all order line graphs of graph $G$ with $\Delta(G) = 3$ and $4$. \\

\emph{Keywords:} Line graph, induced graphs, forbidden graphs. \\

\emph{AMS Subject Classification (2020): 05C76, 05C75.}
\end{abstract}

\section{Introduction}\label{intro}
Let $G$ be a simple, finite, connected graph without loops and multiple edges. The line graph of $G$, denoted by $L(G)$, is the graph whose vertices are the edges of $G$, where $ef \in E(L(G))$ when $e \cap f \neq \emptyset$. Iteratively, the higher order line graph is defined as $L^1(G) = L(G)$ and $L^n(G) = L(L^{n-1}(G))$ for $n \geq 2$. It is known that a connected graph $G$ is isomorphic to its line graph if and only if $G$ is a cycle. Thus, $L^n(C_m) = C_m$ for all $n$ while for $K_{1,3}$, $L^n(K_{1,3}) = L(K_{1,3})$ for all $n$, but $L(K_{1,3}) \neq K_{1,3}$. For a path $P_n$, $L(P_n) = P_{n-1}$,  $L^{n-1}(P_n)$ is a vertex, and $L^{m}(P_n)$ does not exist if $m \geq n$. In this work, we often use the statement that $G$ is a line graph that means $G$ is a line graph of some graph $H$. In other words, $G$ is a line graph that means $G = L(H)$ for some graph $H$. If $L(H) = G$ then we say that graph $H$ is a preimage of a line graph $G$, written as $L^{-1}(G) = H$. A clique is a graph in which every two vertices are adjacent. A subgraph $H$ of graph $G$ is called an induced subgraph if two vertices of $H$ are adjacent in $G$ they are also adjacent in $H$. 

In \cite{Whitney}, Whitney gave the following result about graphs and their line graphs, popularly known as the Whitney graph isomorphism theorem.

\begin{Theorem}\cite{Whitney}\label{whitney} $L(G^1) \cong L(G^2)$ and if $G^1$ and $G^2$ are not a complete graph of three nodes and a complete bipartite graph $K_{1,3}$, respectively, then $G^1 \cong G^2$.
\end{Theorem}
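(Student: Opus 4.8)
The plan is to reconstruct a connected graph from its line graph by recognising, purely inside $L(G)$ viewed as an abstract graph, the clique structure contributed by the vertices of $G$. The right vehicle is the notion of a \emph{Krausz partition} of a graph $X$: a partition of $E(X)$ into cliques of $X$ such that every vertex of $X$ lies in at most two of the parts. I would first set up the correspondence between Krausz partitions and preimages. For a connected graph $H$ with at least two edges, and each vertex $v$ with $\deg_H(v)\ge 2$, let $S_v$ be the set of edges of $H$ incident with $v$; since two distinct edges of $H$ meet in at most one vertex, the family $\mathcal P_H=\{S_v:\deg_H(v)\ge 2\}$ is a Krausz partition of $L(H)$. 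Conversely, from a Krausz partition $\mathcal P$ of a graph $X$ one constructs a graph $H_{\mathcal P}$: take one vertex for each part of $\mathcal P$, add one extra pendant vertex for every vertex of $X$ lying in fewer than two parts, and join two vertices of $H_{\mathcal P}$ precisely when the objects they represent share a point of $X$; a direct check gives $L(H_{\mathcal P})\cong X$, and $H\mapsto\mathcal P_H$ and $\mathcal P\mapsto H_{\mathcal P}$ are mutually inverse up to isomorphism. In particular, any isomorphism $\theta\colon L(G^1)\to L(G^2)$ that carries $\mathcal P_{G^1}$ onto $\mathcal P_{G^2}$ descends to an isomorphism $G^1\to G^2$.

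Given this dictionary, Theorem~\ref{whitney} reduces to the statement that, apart from the exceptional case, a connected line graph $X$ has only one Krausz partition up to automorphisms of $X$, so that the partitions $\mathcal P_{G^1}$ and $\mathcal P_{G^2}$ of $X\cong L(G^1)\cong L(G^2)$ must coincide. To prove this I would classify the maximal cliques of $X=L(G)$ using the Helly property for edges of a graph: a family of pairwise-intersecting edges of $G$ either shares a common vertex or is the edge set of a triangle. Hence every maximal clique of $X$ is a star $S_v$ with $\deg_G(v)\ge 3$ or a triangle clique coming from a triangle of $G$, and the parts of a Krausz partition of $X$ are exactly the stars $S_v$ together with the triangle cliques of those triangles of $G$ all of whose vertices have degree $2$. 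The subtle point is to detect, from $X$ alone, which triangles of $X$ must form a single part and which must be split into three edges lying in three distinct stars. Here I would run the classical dichotomy (essentially the van Rooij--Wilf analysis): call a triangle $\Delta$ of $X$ \emph{odd} if some vertex of $X$ outside $\Delta$ is adjacent to an odd number of vertices of $\Delta$ and \emph{even} otherwise, and show that, with the single structural exception provided by $K_4$, an odd triangle is forced to be a part while an even triangle is forced to be split, after which the partition is completely determined. The short remaining list of small graphs ($K_3$, $K_4$, the diamond, and a few relatives) is checked by hand; the only connected $X$ admitting two non-isomorphic preimages is $X=K_3$, whose Krausz partitions $\{\{e,f,g\}\}$ and $\{\{e\},\{f\},\{g\}\}$ yield $H_{\mathcal P}=K_{1,3}$ and $H_{\mathcal P}=K_3$ respectively. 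This is precisely the stated exception, and the theorem follows.

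The main obstacle is this middle step: showing that for a connected graph on at least five vertices the Krausz partition of its line graph is unique. The bookkeeping around odd versus even triangles, and the need to isolate the handful of small, highly symmetric configurations where the partition is unique only up to automorphism rather than literally unique, is where essentially all the effort lies. By contrast, the reduction via the Krausz dictionary in the first paragraph and the finite small-case verification are comparatively routine.
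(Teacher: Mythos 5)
The paper does not prove this statement at all: it is quoted verbatim from Whitney's 1932 paper as a known result (and the surrounding machinery the authors do develop --- Theorems~\ref{Krausz}, \ref{rooij}, \ref{line:odd}, Observation~\ref{SubInducedSub}(c) and Theorem~\ref{eventri} --- is exactly the Krausz/van~Rooij--Wilf toolkit your outline invokes). So there is no in-paper argument to compare against; your proposal has to stand on its own. As a strategy it is the standard modern route and the skeleton is sound: the bijection between Krausz partitions of $X$ and preimages $H$ with $L(H)\cong X$, the Helly-type classification of maximal cliques of $L(G)$ as vertex-stars $S_v$ or triangle cliques, and the odd/even triangle dichotomy (odd triangles must lie inside a single cell, even triangles must be split across three cells) are all correct and all consistent with the direction the paper uses later.

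The genuine gap is that the theorem's entire content lives in the step you defer. You reduce Whitney's theorem to ``a connected line graph other than $K_3$ has an essentially unique Krausz partition,'' and then say this is ``where essentially all the effort lies'' without carrying it out. That reduction is not the theorem; it is a restatement of it. In particular you do not handle the cases your own dichotomy cannot decide: cells of size one and two (pendant edges and degree-$2$ vertices of $G$ give parts that are not maximal cliques, so the maximal-clique classification alone does not pin down the partition), and the small graphs where two even triangles share an edge ($E_1$, $E_2$, $E_3$ of Theorem~\ref{line:odd}, i.e.\ the line graphs of the paw, the diamond and $K_4$), where the Krausz partition is genuinely non-unique and one must verify by hand that all partitions still return isomorphic preimages. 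Your phrase ``the single structural exception provided by $K_4$'' undercounts this list. Also, the sentence claiming the parts of a Krausz partition are ``exactly the stars $S_v$ together with the triangle cliques of those triangles of $G$ all of whose vertices have degree $2$'' conflates the canonical partition $\mathcal P_G$ with an arbitrary one; for an arbitrary Krausz partition of $X$ you may not yet speak of ``triangles of $G$,'' since $G$ is what you are trying to reconstruct. To make this a proof you would need to argue directly in $X$: every clique of size at least $4$ that is maximal must be a single cell, every even triangle is split, every odd triangle lies in a cell, and the remaining edges are forced by the two-cells-per-vertex constraint --- and then dispatch the finite exceptional list explicitly, exhibiting $K_3$ with its two partitions $\{\{e,f,g\}\}$ and $\{\{e\},\{f\},\{g\}\}$ as the unique source of non-isomorphic preimages $K_{1,3}$ and $K_3$.
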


It is clear by the Whitney isomorphism theorem that there are only two connected graphs, namely $K_{1,3}$ and $K_3$, have the same line graph which is $K_3$. Moreover, there is a one-to-one correspondence between the set of graphs excluding $K_3$ and $K_{1,3}$ and, its line graphs by line graph operation. 

\begin{Observation}{\rm Observe that if graph $G$ is a $K_3$ only then it is difficult to say which one from $K_{1,3}$ or $K_3$ is a preimage, but if $G \neq K_3$ is a connected line graph containing $K_3$ then its preimage can be easily determined as other vertex $v$ adjacent to a vertex of $K_{1,3}$ or $K_3$ makes different line graphs (see the following Figure \ref{Triangle}). This property plays a key role in finding the preimage of any line graph.}
\end{Observation}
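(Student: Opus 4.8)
The plan is to make the remark precise as the statement: \emph{if $G$ is a connected line graph with $G\neq K_3$ and $G$ contains a triangle $T=\{p,q,r\}$, then in every preimage $H$ (with $L(H)=G$) the three $G$-vertices $p,q,r$ lift to the same type of configuration of edges of $H$ — either three edges through a common vertex (a ``star'') or the three sides of a triangle of $H$ — and which of the two occurs is read off from the way the $G$-neighbours of $T$ attach to $\{p,q,r\}$; hence $L^{-1}(G)$ is determined}. I would first record the elementary fact that three distinct pairwise-intersecting edges of a simple graph either all pass through one vertex or form a triangle: if $p\cap q=\{x\}$ and $x\notin r$, then $r$ must meet $p$ in $p\setminus\{x\}$ and $q$ in $q\setminus\{x\}$, two distinct vertices, so $p,q,r$ are the sides of a triangle of $H$. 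Thus $T\subseteq G$ lifts to $H$ either as a star at a vertex $u$ (edges $uU,uV,uW$) or as a triangle $XYZ$ (edges $XY,YZ,ZX$). Aggregated over all triangles this uniqueness is already guaranteed by Theorem~\ref{whitney} (only $K_3$ has two connected preimages); the point of the Observation is the explicit local dictionary, and the proof is a case analysis.

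The case analysis runs on $|N_G(v)\cap\{p,q,r\}|$ for an outside vertex $v$ adjacent to $T$, writing $e_v$ for the corresponding edge of $H$. (i) If $v$ is adjacent to exactly one of $p,q,r$, the triangle lift is impossible, since in a triangle $XYZ$ any edge meeting one side meets a second side; so $T$ lifts as a star. (ii) If $v$ is adjacent to all three, the triangle lift is impossible, since $e_v$ cannot meet three pairwise distinct sides with only two endpoints; so $T$ lifts as a star, and moreover $e_v\ni u$, so $\{p,q,r,v\}$ lies in a larger clique of $G$. (iii) Otherwise every outside neighbour of $T$ is adjacent to exactly two of $p,q,r$: in the star lift such a vertex is forced to be the unique edge joining the two corresponding leaves, so at most one outside vertex can be adjacent to each of the three pairs; hence if two outside vertices are adjacent to the same pair, $T$ must lift as a triangle, and if instead each pair has at most one such neighbour, a short finite check shows the candidate star- and triangle-preimages coincide up to isomorphism (e.g.\ each is a triangle with pendant edges, or $K_4$ minus an edge), so $L^{-1}(G)$ is still determined. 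Since $G$ is connected and $G\neq K_3$, such an outside vertex exists, so one of (i)--(iii) always applies; the small instances are exactly those shown in Figure~\ref{Triangle}.

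The step I expect to be the main obstacle is case (iii): one has to track not only which of the three pairs of $\{p,q,r\}$ each outside neighbour of $T$ sees, but also the mutual adjacencies of these neighbours, and then verify for each of the finitely many configurations that exactly one lift is consistent, or that the two lifts give isomorphic graphs. I would organise this by fixing the star-lift labelling $p=uU$, $q=uV$, $r=uW$ and showing that every further $G$-vertex compatible with the star lift must be one of the at most three leaf-edges $UV,VW,UW$; this bounds the number of relevant configurations and reduces the comparison with the triangle lift to a routine check, after which the adjacency pattern of the outside neighbours of $T$ pins down the preimage.
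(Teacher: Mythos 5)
The paper gives no proof of this Observation: it is justified only by Figure \ref{Triangle} (a pendant edge attached to a leaf of $K_{1,3}$ yields the paw as line graph, while a pendant edge attached to $K_3$ yields $K_4-e$, so any additional vertex distinguishes the two candidate preimages) together with, implicitly, Whitney's theorem (Theorem \ref{whitney}), which already guarantees that a connected line graph other than $K_3$ has a unique preimage. Your proposal therefore takes a genuinely different and much more detailed route: you first prove the local dichotomy that three pairwise intersecting edges of $H$ form either a star or a triangle, and then classify the lift of a triangle $T$ of $G$ by how many vertices of $T$ each outside neighbour sees. That analysis is sound and in fact anticipates machinery the paper only introduces later: your cases (i) and (ii) amount to Observation \ref{SubInducedSub}(c) (odd triangles lift to stars), and your case (iii) is the even-triangle analysis of Theorems \ref{rooij}, \ref{line:odd} and \ref{eventri}, with the coincidences you observe ($K_{1,3}+e$ equalling the paw, the two lifts of the diamond, $K_4$) being exactly the exceptional graphs $E_1,E_2,E_3$. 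What your approach buys is an explicit, constructive local dictionary for reading off the preimage, which is closer in spirit to the reconstruction algorithm of \cite{Rouss} than to the bare uniqueness statement the paper relies on. The one loose step is the ``short finite check'' at the end of case (iii): when each pair of $T$ has at most one common neighbour but $G$ extends beyond the closed neighbourhood of $T$, the two local lifts need not both remain consistent (for instance, a vertex adjacent only to the common neighbour of $\{p,q\}$ rules out the star lift), so the comparison is not purely local; you should either propagate the analysis outward or, as you already note, fall back on Theorem \ref{whitney} for global uniqueness. With that caveat the argument is correct.
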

\begin{figure}[ht!]
\centering
\includegraphics[width=11cm]{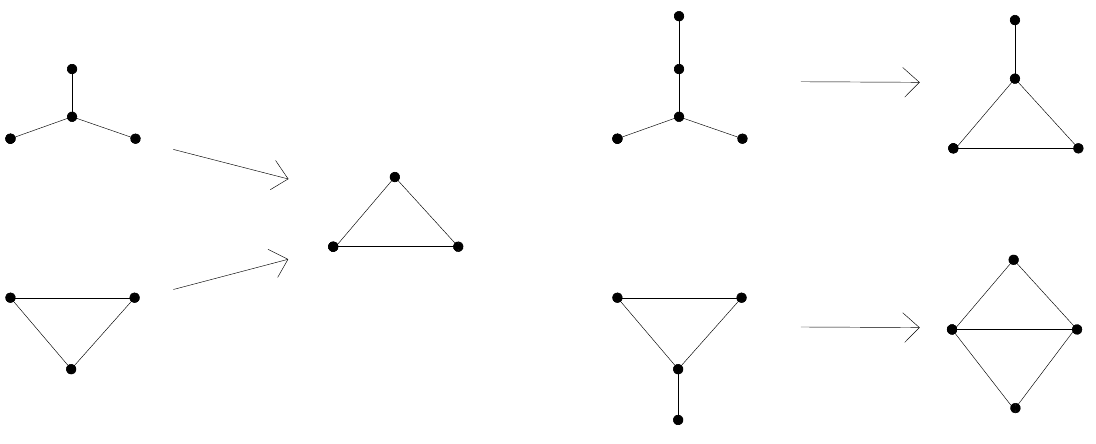}
\caption{Graphs $K_{1,3}$, $K_3$ and its line graphs.}
\label{Triangle}
\end{figure}

The following characterization of line graphs was given by Krausz in \cite{Krausz} and Rooij in \cite{Rooij}, respectively.

\begin{Theorem}\label{Krausz}\cite{Krausz} A graph is a line graph if and only if its edges can be partitioned into complete subgraphs in such a way that no vertex lies in more than two of these subgraphs.
\end{Theorem}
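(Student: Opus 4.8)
The plan is to prove the two directions of the equivalence separately, in each case arguing straight from the definition of the line graph. For the forward direction, assume $G = L(H)$ for some simple graph $H$. For every vertex $v$ of $H$, let $Q_v$ denote the set of edges of $H$ incident with $v$; since any two such edges meet at $v$, the set $Q_v$ spans a complete subgraph of $L(H) = G$. I would then check that the edge sets of the $Q_v$ form a partition of $E(G)$: an edge $ef$ of $G$ comes from two edges $e,f$ of $H$ sharing an endpoint, and because $H$ has no parallel edges this endpoint $w$ is unique, so $ef$ belongs to $Q_w$ and to no other $Q_v$. Finally, a vertex $e = xy$ of $G$ is an edge of $H$ with exactly two endpoints, so it lies in precisely the two cliques $Q_x$ and $Q_y$, hence in at most two of them; the cliques $Q_v$ with at most one vertex carry no edges and may simply be discarded.

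For the converse, suppose $E(G)$ is partitioned into complete subgraphs $Q_1,\dots,Q_k$ with every vertex of $G$ in at most two of them. The key observation is that two distinct cliques of the partition meet in at most one vertex: if $x,y$ both lay in $Q_i$ and in $Q_j$ then the edge $xy$ would be covered twice, contradicting that this is a partition and not merely a cover. After adjoining one extra singleton clique for each vertex that currently lies in only one $Q_i$ (and handling the trivial case $G = K_1$ by hand), I may assume every vertex of $G$ lies in exactly two cliques of an enlarged family $Q_1,\dots,Q_m$. Now define a graph $H$ with $V(H) = \{Q_1,\dots,Q_m\}$ and, for each vertex $x$ of $G$, an edge $e_x$ joining the two cliques containing $x$. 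The intersection bound makes $x \mapsto e_x$ a bijection from $V(G)$ onto $E(H)$ and forces $H$ to be loopless and simple. It then remains to see that this bijection is an isomorphism $G \to L(H)$: two edges $e_x,e_y$ are adjacent in $L(H)$ iff the clique-pairs defining them overlap, i.e.\ iff some $Q_i$ contains both $x$ and $y$, which --- since each $Q_i$ is complete in $G$ and the $Q_i$ cover all edges of $G$ --- is precisely the condition $xy \in E(G)$.

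The routine content here is the sequence of clique, bijection, and adjacency verifications. The one place calling for genuine care is the bookkeeping around vertices of $G$ lying in fewer than two cliques (these become pendant, or isolated, vertices of the preimage $H$) and the confirmation that $H$ is a bona fide simple graph with no loops or multiple edges; this is exactly the step where both hypotheses --- that the complete subgraphs \emph{partition} the edges and that \emph{no vertex lies in more than two} of them --- are essential, and where a sloppy argument would go wrong.
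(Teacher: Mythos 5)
Your argument is correct and is essentially the classical Krausz proof: incident-edge stars of $H$ give the clique partition in one direction, and in the other direction the cliques become the vertices of the preimage $H$, with the at-most-one-vertex intersection of distinct cliques (forced by the partition hypothesis) guaranteeing that $H$ is simple. The paper itself states this theorem as a cited result of Krausz and gives no proof, so there is nothing to compare against; the only point to tidy is that a vertex lying in \emph{zero} cliques of the partition (an isolated vertex of $G$) would need two adjoined singletons rather than one, but since the paper restricts attention to connected graphs and you treat $K_1$ separately, this does not affect your argument.
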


The partition of edges of a line graph into complete subgraphs as in Theorem \ref{Krausz} is called `line partition' and the partition sets (complete subgraphs of a line partition), denoted by $C^1,C^2,\ldots,C^k$, are known as 'cells' of line partition. A triangle of a graph $G$ is called an {\it odd} triangle if there is a vertex of $G$ adjacent to an odd number of its vertices (that is, adjacent to either one or all three vertices), and an {\it even triangle}, otherwise (that is, any vertex of graph $G$ that is adjacent to vertices of the triangle is adjacent to exactly two vertices of the triangle).

The following succinct characterization for line graphs states such graphs as having a $K_4$ whenever there are two distinct odd triangles sharing a common edge.

\begin{Theorem}\cite{Rooij}\label{rooij} A graph is a line graph if and only if it does not have the star $K_{1,3}$ as an induced subgraph and whenever $a,b,c$ and $b,c,d$ are distinct odd triangles, then $a$ and $d$ are adjacent.
\label{succinct-linegraph}
\end{Theorem}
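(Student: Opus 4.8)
I would prove the two directions of the characterisation separately, with the ``only if'' direction being the routine half and the ``if'' direction the substantial one, reduced at the end to Theorem~\ref{Krausz}.

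\emph{Necessity.} Assume $G = L(H)$, where I may take $H$ connected. That $G$ is $K_{1,3}$-free is the standard pigeonhole fact: if a vertex $e_0 = \{u,v\}$ of $L(H)$ had three pairwise non-adjacent neighbours $e_1,e_2,e_3$, then each $e_i$ would meet $\{u,v\}$, so two of them would share $u$ (or $v$) and hence be adjacent. For the odd-triangle part I would use the Helly-type dichotomy for three pairwise intersecting $2$-sets: three pairwise adjacent edges of $H$ either have a common endpoint $p$ (a \emph{star triangle at $p$}) or form a $3$-cycle of $H$ (a \emph{triangle triangle}). A one-line check shows a triangle triangle on vertices $\{a,b,c\}$ is always even, since any further edge meeting it contains exactly one of $a,b,c$ and is therefore adjacent to exactly two of the corresponding three vertices of $L(H)$; hence every odd triangle of $L(H)$ is a star triangle. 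Now if $a,b,c$ and $b,c,d$ are distinct odd triangles, say stars at $p$ and $q$ respectively, then the edges $b$ and $c$ of $H$ each contain both $p$ and $q$; since $b\neq c$ and an edge has only two endpoints, this forces $p=q$, so $a,b,c,d$ all contain $p$, and in particular $a$ and $d$ are adjacent in $L(H)$.

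\emph{Sufficiency.} Assume $G$ is connected, $K_{1,3}$-free, and satisfies the odd-triangle condition; the plan is to produce a Krausz line partition of $G$ and then invoke Theorem~\ref{Krausz}. I would build the cells by mimicking the structure exhibited above: in a line graph the maximal cliques of size $\geq 4$ occur intact as ``big-star'' cells, whereas a triangle is either of big-star type or else corresponds to a $3$-cycle of the preimage and must then be split so that its three edges fall into three distinct cells. Concretely, I would start from the maximal cliques of $G$, keep the large ones, and then resolve the triangles one at a time --- bundling a triangle into a larger cell, or splitting it --- using the odd-triangle hypothesis to decide consistently which triangles to group together; edges lying in no triangle simply become their own two-vertex cells. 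Once the cells are fixed, one checks that they cover every edge of $G$ exactly once and that no vertex lies in more than two of them, so Theorem~\ref{Krausz} delivers $G = L(H)$. The small graphs $K_3$ and $K_{1,3}$, where even the preimage is ambiguous, are exceptional and are disposed of by hand at the outset.

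\emph{The main obstacle.} The crux is the consistency of this construction, and this is exactly where the implication ``$a,b,c$ and $b,c,d$ odd $\Rightarrow ad\in E(G)$'' is indispensable: without it, two odd triangles sharing an edge need not merge into a common clique, and a vertex could be forced into three different ``big-star'' cells --- an obstruction that cannot occur in a genuine line graph. So the core lemma I would establish first is that if $T_1,T_2$ are odd triangles sharing an edge then $T_1\cup T_2$ lies in a unique maximal clique, and every vertex of $G$ lies in at most two such cells. I expect this to require a careful case analysis of how a fourth vertex can attach to the configuration --- using $K_{1,3}$-freeness to kill independent triples and the odd-triangle hypothesis to supply the missing edges --- and this is the step I would budget the most effort for.
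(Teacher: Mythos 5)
The paper does not prove this statement at all: it is quoted verbatim from van~Rooij and Wilf \cite{Rooij}, so there is no in-paper argument to compare your route against; your proposal has to stand on its own.

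On its own terms, your necessity half is complete and correct. The pigeonhole argument for $K_{1,3}$-freeness, the dichotomy that three pairwise intersecting edges of $H$ form either a star or a $3$-cycle, the observation that a ``triangle triangle'' is automatically even (any other edge of a simple graph contains at most one vertex of the $3$-cycle and is therefore adjacent to exactly $0$ or $2$ of the corresponding vertices of $L(H)$), and the forced identification $p=q$ of the two star centres are all sound and constitute a full proof of that direction.

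The sufficiency half, however, is only a plan, and the gap you yourself flag at the end is the entire content of the theorem in that direction. Phrases such as ``resolve the triangles one at a time \ldots using the odd-triangle hypothesis to decide consistently which triangles to group together'' and ``one checks that they cover every edge exactly once and that no vertex lies in more than two of them'' describe what must be proved, not a proof. Two concrete things are missing. First, your core lemma (two odd triangles sharing an edge lie in a unique maximal clique, and every vertex lies in at most two cells) is asserted, not established; the required case analysis of how a fourth vertex attaches is exactly where $K_{1,3}$-freeness and the odd-triangle hypothesis must be deployed, and you have not done it. Second, your guiding principle that ``maximal cliques of size $\geq 4$ occur intact as big-star cells'' is a fact \emph{about line graphs}; in the sufficiency direction you may not assume $G$ is a line graph, so the analogous structural statement must be derived purely from $K_{1,3}$-freeness and the odd-triangle condition before it can be used to define the cells. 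You also need a well-defined, non-circular rule for splitting an even triangle's three edges into three distinct cells consistently with the cells already chosen around it. Until the partition is explicitly defined and the two Krausz conditions verified, the reduction to Theorem~\ref{Krausz} is not available, and the sufficiency direction remains unproved.
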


\begin{Theorem}\cite{Rooij}\label{line:odd} If $G$ contains two even triangles with a common edge then it is one of the graphs $L(K_{1,3}+x) = E_1$, $L(E_1) = E_2$ or $L(K_4) = E_3$ (shown in following Figure \ref{Fig3}).
\begin{figure}[ht!]
\centering
\includegraphics[width=9cm]{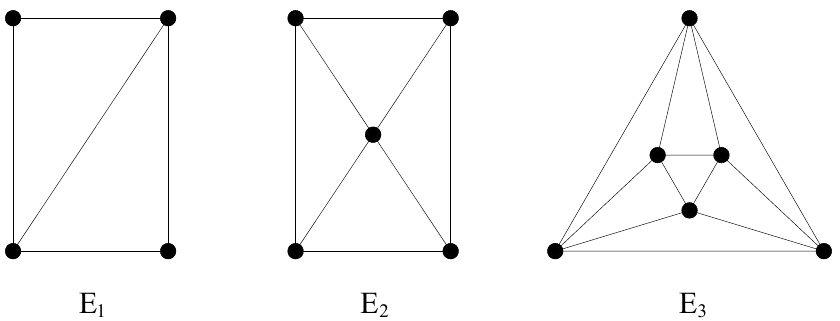}
\caption{Graphs $E_1$, $E_2$ and $E_3$.}
\label{Fig3}
\end{figure}
\end{Theorem}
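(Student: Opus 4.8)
The plan is to pass from $G$ to a graph $H$ with $G = L(H)$ and recover $H$ from the way the two even triangles sit inside it. This uses that the statement is genuinely one about line graphs --- for an arbitrary graph it fails, e.g.\ $K_{2,3}$ with an extra edge joining the two vertices of its smaller part has three even triangles through a common edge yet is none of $E_1,E_2,E_3$ --- so I take $G$ to be a connected line graph and write $G = L(H)$ with $H$ connected. The key structural fact is that a triangle of $L(H)$ corresponds to three pairwise incident edges of $H$, and three pairwise incident edges of a \emph{simple} graph either all pass through one vertex (call the triangle a \emph{star-triangle}) or form a $3$-cycle of $H$ (a \emph{cycle-triangle}). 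So the first move is to transport the given triangles $T_1 = \{a,b,c\}$ and $T_2 = \{a,b,d\}$, with $c \neq d$, to $H$ and decide their types.

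The second step is a parity computation inside $H$: checking where an arbitrary further edge of $H$ can fall shows that a cycle-triangle of $H$ is \emph{always} even in $L(H)$, whereas a star-triangle at a vertex $q$ on edges $qu,qv,qw$ is even in $L(H)$ \emph{only if} $\deg_H q = 3$ and no edge of $H$ leaves $\{q,u,v,w\}$; since $H$ is connected this means $V(H) = \{q,u,v,w\}$ with $q$ adjacent to the other three. Now the edges $a,b$ of $H$ share a single vertex $q$, say $a = qx$ and $b = qy$, and in each $T_i$ the third edge meets both $a$ and $b$, hence equals either $xy$ (making $T_i$ a cycle-triangle on $\{q,x,y\}$) or $qz$ for some $z \notin \{x,y\}$ (a star-triangle at $q$). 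Both $T_1,T_2$ cannot be star-triangles at $q$, as that would force $\deg_H q = 3$ while $q$ meets the four distinct edges $qx,qy$ and the two third-edges; and they cannot both be the cycle-triangle on $\{q,x,y\}$, as that forces $c = xy = d$. So one of them, say $T_1$, is the cycle-triangle $\{qx,qy,xy\}$ (in particular $xy \in E(H)$) and $T_2$ is a star-triangle at $q$ on $\{qx,qy,qw\}$.

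Finally, reading off $H$: the star-triangle criterion applied to $T_2$ gives $V(H) = \{q,x,y,w\}$ with $q$ adjacent to $x,y,w$, and since also $xy \in E(H)$ the graph $H$ is $K_{1,3}$ with centre $q$ together with one, two, or three of the edges among $\{x,y,w\}$, i.e.\ (up to isomorphism) $K_{1,3}+x$, the diamond $K_4-e$, or $K_4$. The case $K_{1,3}$ (no edge among the leaves) is impossible because $L(K_{1,3}) = K_3$ has no two distinct triangles, and the remaining three cases give $G = L(H)$ equal to $L(K_{1,3}+x) = E_1$, $L(K_4-e) = L(E_1) = E_2$, or $L(K_4) = E_3$, which is the claim.

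I expect the only real obstacle to be getting the star-triangle evenness criterion exactly right in Step 2. The tempting guess that the three neighbours $u,v,w$ of the apex must all have degree $1$ in $H$ is wrong: an edge joining two of them is incident to \emph{two} of the three star-edges and so does not violate evenness. Once one is careful that a leaf of the star may itself be an endpoint of another star-edge, and that the external-edge case check is exhaustive, the rest is bookkeeping on a graph with at most four vertices.
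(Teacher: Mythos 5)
The paper does not prove this statement at all: it is imported verbatim from van Rooij and Wilf \cite{Rooij} as a citation, so there is no internal argument to compare yours against. Your proof is a correct, self-contained derivation, and it follows the natural route: pass to a preimage $H$ with $G=L(H)$, classify each triangle of $L(H)$ as a star-triangle (three edges of $H$ through one vertex) or a cycle-triangle (a $3$-cycle of $H$), and compute parity in each case. Your two parity facts are right --- a cycle-triangle is automatically even because a further edge of a simple graph can meet a $3$-cycle in only one vertex and hence meets exactly two of its three edges, while an even star-triangle at $q$ forces $\deg_H q=3$ and, with connectivity, $V(H)=\{q,u,v,w\}$ --- and the argument that of the two triangles sharing the edge $ab=qx,qy$ exactly one is the cycle-triangle on $\{q,x,y\}$ and the other a star-triangle at $q$ is airtight (two star-triangles would give $\deg_H q\ge 4$, killing evenness; two cycle-triangles would force $c=d$). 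The enumeration of $H$ as paw, diamond, or $K_4$, hence $G\in\{E_1,E_2,E_3\}$, is then immediate. One genuinely useful point you add that the paper glosses over: as stated, the theorem needs the hypothesis that $G$ is a (connected) line graph, and your $K_{2,3}$-plus-an-edge example shows this cleanly; the paper's own use of the result (Corollary \ref{cor:even}) confirms that this is the intended reading. The only redundancy is your final appeal to $L(K_{1,3})=K_3$ to exclude the bare claw: that case is already excluded because $xy\in E(H)$ was forced by $T_1$ being a cycle-triangle.
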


\begin{Corollary}\label{cor:even} For a line graph $G$ not isomorphic to $K_3$, $E_1$, $E_2$ or $E_3$, whenever two triangles in $G$ share an edge, at least one of them is odd.
\end{Corollary}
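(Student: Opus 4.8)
The plan is to obtain this statement purely by contraposition from Theorem~\ref{line:odd}; no new combinatorics is needed. Suppose $G$ is a line graph with $G \not\cong K_3, E_1, E_2, E_3$, and let $T_1$ and $T_2$ be two distinct triangles of $G$ sharing a common edge $e$. I would argue by contradiction: assume that neither $T_1$ nor $T_2$ is odd. Since the defining dichotomy "odd triangle / even triangle" is exhaustive (a triangle is even precisely when no vertex of $G$ is adjacent to an odd number of its vertices), "not odd" is the same as "even", so both $T_1$ and $T_2$ are even triangles. Then $G$ contains two even triangles with the common edge $e$, and Theorem~\ref{line:odd} forces $G \cong E_1$, $E_2$, or $E_3$, contradicting the hypothesis on $G$. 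Hence at least one of $T_1, T_2$ is odd, which is exactly the claim.

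It remains only to address why the four graphs $K_3, E_1, E_2, E_3$ are singled out. For $G \cong K_3$ there is a single triangle, hence no pair of distinct triangles sharing an edge, so the statement would be vacuous; it is listed among the exceptions because that unique triangle is (vacuously) even, so $K_3$ is the degenerate single-triangle case that the "two triangles" phrasing cannot otherwise accommodate. For $E_1, E_2, E_3$ one checks directly from Figure~\ref{Fig3} that each contains two distinct triangles sharing an edge with both of them even — indeed this is precisely the content of Theorem~\ref{line:odd} read in the other direction — so the list of exceptions cannot be shortened; this makes the corollary sharp.

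I do not anticipate any real obstacle here: the argument is a one-line application of Theorem~\ref{line:odd}. The only points requiring a modicum of care are (i) recording explicitly that "not odd" $\equiv$ "even", so that the hypothesis of Theorem~\ref{line:odd} is literally met, and (ii) insisting that $T_1 \ne T_2$, so that "two even triangles with a common edge" in the sense of Theorem~\ref{line:odd} is genuinely exhibited. With these two observations in place the corollary follows immediately.
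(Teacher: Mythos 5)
Your argument is correct and is exactly the intended derivation: the corollary is stated in the paper without proof as an immediate consequence of Theorem~\ref{line:odd}, and your contrapositive reading (two distinct triangles sharing an edge, both even, would force $G \cong E_1, E_2,$ or $E_3$) is precisely that consequence. Your side remarks on why $K_3$ is listed (it has only one triangle, so the hypothesis cannot occur) and on the sharpness of the exception list are accurate and harmless additions.
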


Beineke gave the characterization in terms of forbidden subgraphs for line graphs in \cite{Beineke} as follows:
\begin{Theorem}\cite{Beineke}\label{Beineke9} A graph is a line graph if and only if it does not contain any of the nine graphs shown in Figure \ref{Fig1} as an induced subgraph.
\begin{figure}[ht!]
\centering
\includegraphics[width=10cm]{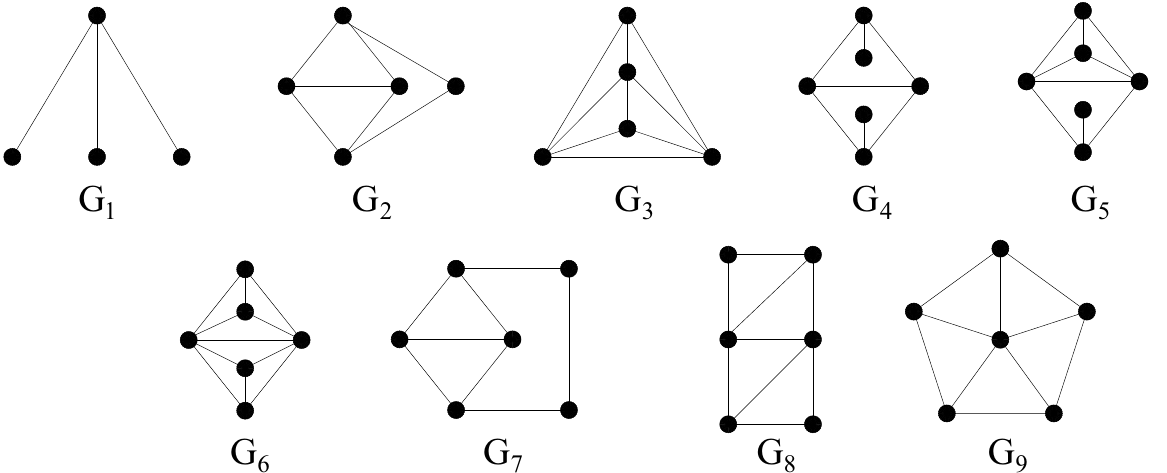}
\caption{The nine forbidden induced subgraphs in line graphs.}
\label{Fig1}
\end{figure}
\end{Theorem}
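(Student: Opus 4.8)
The plan is to prove the two implications separately, leaning on the characterizations already recorded above. For \textbf{necessity} (a line graph contains none of the nine graphs as an induced subgraph), I would first record the elementary fact that the class of line graphs is closed under taking induced subgraphs: if $H = L(G)[S]$ for some $S \subseteq E(G)$ and $G'$ denotes the subgraph of $G$ with edge set $S$, then two edges of $G'$ share an endpoint exactly when the corresponding vertices of $S$ are adjacent in $H$ (here we use that $H$ is induced), so $H = L(G')$ is again a line graph. Consequently it suffices to check that none of the nine graphs in Figure \ref{Fig1} is itself a line graph. For $G_1 = K_{1,3}$ this is the classical observation that three pairwise non-adjacent vertices with a common neighbour would force three pairwise disjoint edges of a preimage all meeting one common edge, which is impossible since that common edge has only two endpoints; for each of the remaining eight graphs one checks by hand that its edge set admits no partition into cliques with every vertex in at most two of them, so Krausz's criterion (Theorem \ref{Krausz}) is violated. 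This is a finite verification.

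For \textbf{sufficiency}, suppose $G$ has no induced subgraph isomorphic to any of $G_1,\dots,G_9$. I would deduce that $G$ is a line graph by verifying the succinct criterion of Rooij (Theorem \ref{rooij}). Since $G_1 = K_{1,3}$ is forbidden, $G$ has no induced $K_{1,3}$, and it remains to show: whenever $a,b,c$ and $b,c,d$ are distinct odd triangles of $G$, the vertices $a$ and $d$ are adjacent. Assume to the contrary that such triangles exist with $a \not\sim d$; note $G[\{a,b,c,d\}] = K_4 - ad$. By the definition of an odd triangle there is a vertex $x$ adjacent to an odd number (one or three) of $\{a,b,c\}$ and a vertex $y$ adjacent to an odd number of $\{b,c,d\}$, with possibly $x = y$. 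The $K_{1,3}$-freeness of $G$ then severely restricts the possible adjacencies of $x$ and $y$ to $\{a,b,c,d\}$ and to one another.

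The heart of the argument is an exhaustive case analysis organised by (i) the ``type'' of $x$ relative to $\{a,b,c\}$ --- adjacent to exactly one of them, or to all three --- and symmetrically for $y$ relative to $\{b,c,d\}$; (ii) whether $x = y$ or $x \neq y$; and (iii) in the latter case whether $x \sim y$. In each branch I would use the absence of an induced $K_{1,3}$ to force the presence of certain further edges among $\{a,b,c,d,x,y\}$ and then identify the subgraph induced on an appropriate subset of these (at most six) vertices with one of $G_2,\dots,G_9$, contradicting the hypothesis.

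I expect this last step to be the main obstacle: the case analysis is elementary but long, one must keep careful track of which of the eight remaining forbidden graphs is produced in each branch, and one must be meticulous about the degenerate situations where $x$, $y$ and the vertices of the two triangles coincide or where extra edges among them are forced. Apart from that, the proof is essentially bookkeeping on top of Theorems \ref{Krausz} and \ref{rooij}; alternatively one could bypass Theorem \ref{rooij} and instead build a Krausz clique partition of $G$ directly by induction on $|V(G)|$, but the resulting case analysis is of comparable length.
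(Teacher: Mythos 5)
This statement is Beineke's classical characterization, which the paper only cites from \cite{Beineke} and does not prove, so there is no in-paper argument to compare yours against; I can only assess your proposal on its own terms. Your outline follows the standard route and is sound in structure: for necessity, your observation that line graphs are closed under taking induced subgraphs (restricting to the edge subset $S$ and using that $H$ is induced) is correct, and it reduces the claim to the finite verification, via Theorem~\ref{Krausz}, that none of $G_1,\dots,G_9$ is itself a line graph; for sufficiency, since Theorem~\ref{rooij} is an equivalence, it is legitimate to reduce to showing that a $K_{1,3}$-free graph in which two distinct odd triangles $abc$, $bcd$ share the edge $bc$ with $a\not\sim d$ must contain one of $G_2,\dots,G_9$ as an induced subgraph, and because each of those graphs has at most six vertices the analysis can plausibly be closed on subsets of $\{a,b,c,d,x,y\}$.

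The genuine reservation is that the proposal stops exactly where the theorem's real content begins: the exhaustive case analysis is only described and acknowledged as ``the main obstacle,'' not carried out, so what you have is a correct plan plus two routine finite checks rather than a complete proof. If you execute it, two points need care. First, the witnesses $x$ and $y$ are not constrained only relative to ``their'' triangles: you must track the adjacencies of $x$ to $d$ and of $y$ to $a$ as well, since these change which forbidden graph arises (and whether you must first invoke claw-freeness to force additional edges among $\{a,b,c,d,x,y\}$ before one appears). Second, in some branches the six vertices at hand do not immediately induce a forbidden graph and one must either choose a different odd-triangle witness or pass to a suitable five-vertex subset; these degenerate branches (including $x=y$ and $x\sim y$) are where such arguments typically go wrong, so each branch should end by naming the specific graph among $G_2,\dots,G_9$ that is induced and on which vertex set.
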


$\check{\mbox{S}}$olt\'es \cite{Soltes} decreased the number of forbidden subgraphs from nine to seven and proved the following result.

\begin{Theorem}\cite{Soltes}\label{line_forb9} The following five statements are equivalent for a connected graph $G$:
\begin{enumerate}[\rm (a)]
\item $G$ is a line graph.
\item $G$ does not contain any of the graphs $G_1-G_9$ (see Figure \ref{Fig1}) as an induced subgraph.
\item $G$ does not contain any of the graphs $G_1-G_8$ as an induced subgraph and $G$ is not $G_9$.
\item $G$ does not contain any of the graphs $G_1-G_7$ and $G_9$ as an induced subgraph and $G$ is neither $G_8$ nor $H_1$ (see Figure \ref{Fig2}).
\item $G$ does not contain any of the graphs $G_1-G_7$ as an induced subgraph and $G$ is not isomorphic to any of the graphs $G_8, G_9, H_1, H_2$ and $H_3$, (see Figure \ref{Fig2}).
\end{enumerate}
Moreover, for every integer $i < 7$, there are infinitely many connected non-line graphs containing only $G_i$ as an induced subgraph among $G_1-G_9$.
\begin{figure}[ht!]
\centering
\includegraphics[width=9cm]{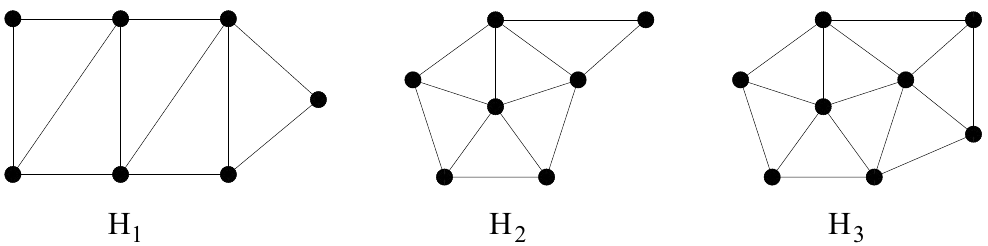}
\caption{Three forbidden induced subgraphs in line graphs.}
\label{Fig2}
\end{figure}
\end{Theorem}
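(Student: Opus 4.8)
\medskip
\noindent The plan is to take the equivalence (a)$\Leftrightarrow$(b) for granted — it is precisely Beineke's theorem (Theorem~\ref{Beineke9}) — and to derive (c), (d), (e) and the ``moreover'' clause from it. The only property of line graphs I will use is an immediate restatement of Theorem~\ref{Beineke9}: no line graph has an induced subgraph isomorphic to any of $G_1,\dots,G_9$, so \emph{any} graph possessing such an induced subgraph fails to be a line graph. In particular $G_8$, $G_9$ and each of $H_1,H_2,H_3$ (Figure~\ref{Fig2}) is a non-line graph, since each of them has an induced $G_8$ or an induced $G_9$.

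The core of the argument is a single finite classification, which I would isolate as a lemma: \emph{the connected non-line graphs containing none of $G_1,\dots,G_7$ as an induced subgraph are exactly $G_8$, $G_9$, $H_1$, $H_2$ and $H_3$.} One direction is a direct inspection of Figures~\ref{Fig1} and \ref{Fig2} (each of these five graphs is non-line and has no induced $G_1,\dots,G_7$). For the converse, let $G$ be connected and non-line with no induced $G_1,\dots,G_7$; by Theorem~\ref{Beineke9} it has an induced copy $F$ of $G_8$ or of $G_9$. If $G=F$ we are done. Otherwise, using connectivity, choose $v\notin V(F)$ with a neighbour in $V(F)$, and run through all possibilities for $N(v)\cap V(F)$, up to the automorphisms of $F$, inspecting $G[V(F)\cup\{v\}]$ in each. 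For all but finitely many adjacency patterns this induced subgraph already contains an induced $G_j$ with $j\le 7$, which is excluded, while the surviving patterns yield exactly (copies of) $H_1$, $H_2$ and $H_3$. One then repeats the one-vertex extension from $H_1$, $H_2$ and $H_3$ and checks that every further attachment forces an induced member of $\{G_1,\dots,G_7\}$; hence the process terminates and $G\in\{G_8,G_9,H_1,H_2,H_3\}$.

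Granting this lemma, (e) is immediate: a line graph contains no induced $G_1,\dots,G_7$ and is none of the five exceptional graphs (all of which are non-line), and conversely a graph avoiding $G_1,\dots,G_7$ that is none of the five cannot be non-line by the lemma, hence is a line graph. For (c) and (d) I would additionally record, by inspecting Figures~\ref{Fig1} and \ref{Fig2}, which of $G_8,G_9,H_1,H_2,H_3$ contain an induced $G_8$ and which contain an induced $G_9$; with this bookkeeping and the trivial inclusions among the three forbidden lists, (c) and (d) follow by the same short deduction as (e). For the ``moreover'' clause, observe first that any graph with an induced $G_i$ is non-line, so it suffices to exhibit, for each $i\le 6$, infinitely many pairwise non-isomorphic connected graphs whose only induced Beineke subgraph is $G_i$. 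For $i=1$ the stars $K_{1,n}$ with $n\ge 3$ work, being triangle-free with an induced $K_{1,3}=G_1$; for $i\in\{2,\dots,6\}$ I would attach a sufficiently long pendant path to $G_i$ at a suitably chosen vertex and verify that no new induced Beineke subgraph is created.

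The real work — and essentially the only obstacle — is the finite case analysis inside the lemma: going through every single-vertex extension of $G_8$, of $G_9$, and of each of $H_1,H_2,H_3$, and for each either naming a concrete induced $G_j$ with $j\le 7$ or recognising one of $H_1,H_2,H_3$. No individual case is hard; the delicate point is \emph{completeness}, that is, being certain that no minimal obstruction beyond $H_1,H_2,H_3$ slips through. The containment bookkeeping for (c) and (d) and the verification that the pendant-path families in the ``moreover'' clause introduce no extra Beineke subgraph are likewise routine but lengthy, and both can be streamlined using the odd-triangle criterion of Theorem~\ref{rooij} (or Krausz's cell decomposition, Theorem~\ref{Krausz}) to test small graphs for being line graphs.
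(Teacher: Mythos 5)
This theorem is not proved in the paper at all: it is quoted verbatim from \cite{Soltes} and used later as a black box, so there is no in-paper argument to compare yours against; your proposal has to stand on its own. Its architecture is the natural one (and, to my knowledge, essentially the route taken in \cite{Soltes} itself): take (a)$\Leftrightarrow$(b) from Theorem~\ref{Beineke9}, reduce everything to the single classification lemma that the connected non-line graphs with no induced $G_1,\dots,G_7$ are exactly $G_8$, $G_9$, $H_1$, $H_2$, $H_3$, prove that lemma by one-vertex extensions of an induced $G_8$ or $G_9$, and then obtain (c), (d), (e) by recording which of the five exceptional graphs contain an induced $G_8$ and which contain an induced $G_9$. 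The logical skeleton is sound: the termination argument via connectivity is valid, and the deductions of (c), (d), (e) from the lemma plus the containment bookkeeping are correct.

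The gap is that essentially all of the mathematical content of the theorem sits inside the steps you explicitly defer. The lemma's exhaustive adjacency-pattern analysis --- showing that every one-vertex extension of $G_8$ or $G_9$ either creates an induced $G_j$ with $j\le 7$ or is one of $H_1,H_2,H_3$, and that every further extension of $H_1,H_2,H_3$ creates such a $G_j$ --- is asserted but never carried out, and completeness of that case check is exactly the delicate point you yourself flag. Likewise, the ``moreover'' clause for $2\le i\le 6$ requires verifying that each $G_i$ has a simplicial vertex at which a long pendant path can be attached without creating any new induced member of $G_1,\dots,G_9$ (a finite check near the attachment point), and this too is only promised. Nothing in the plan looks like it would fail, but as written it is a correct outline rather than a proof; to count as a proof of the cited result it would need the finite case analyses actually executed, or an explicit appeal to \cite{Soltes} for them.
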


\begin{Observation}\label{SubInducedSub} Let $G$ be a line graph such that $G = L(H)$. Then the following hold.
\begin{enumerate}[\rm (a)]
\item Every induced subgraph of a line graph is itself a line graph.
\item If $G$ is a line graph of $H$ and $G_1$ is an induced subgraph of $G$ then $L^{-1}(G_1) = H_1$ is a subgraph of $H$ not necessarily induced.
\item For an odd induced triangle $T$ in $G$, the edges corresponding to vertices of $T$ in $H$ constitute a star.
\end{enumerate}
\end{Observation}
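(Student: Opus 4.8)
Here is a plan. The plan is to treat all three parts through one elementary device: identify the vertex set of $G$ with the edge set $E(H)$, and for a set $S\subseteq V(G)$ let $F\subseteq E(H)$ be the corresponding set of edges and $H_1$ the subgraph of $H$ whose edge set is $F$ and whose vertex set consists of the endpoints of the edges in $F$. Two edges of $H$ are adjacent in $L(H_1)$ exactly when they share an endpoint in $H_1$, which happens exactly when they share an endpoint in $H$, i.e. exactly when they are adjacent in $G$; hence $L(H_1)=G[S]$. Taking $S=V(G_1)$ gives $L(H_1)=G_1$. Part (a) follows at once, since $G_1=L(H_1)$ is a line graph. (Alternatively, (a) is immediate from Beineke's Theorem \ref{Beineke9}: an induced subgraph of $G_1$ is an induced subgraph of $G$, so $G_1$ inherits the absence of all of $G_1,\dots,G_9$.) Part (b) also follows, since $H_1$ is by construction a subgraph of $H$ with $L(H_1)=G_1$, hence a preimage $L^{-1}(G_1)$; it need not be induced in $H$ because $H$ may contain an edge joining two vertices of $H_1$ that does not itself belong to $F$, so $H_1$ is only a spanning subgraph of the induced subgraph $H[V(H_1)]$.

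For part (c), let $T=\{a,b,c\}$ be an odd induced triangle in $G$ and let $e_a,e_b,e_c\in E(H)$ be the corresponding edges; since $a,b,c$ are pairwise adjacent in $G$, the edges $e_a,e_b,e_c$ are pairwise incident in $H$. A short case check, which I would record explicitly, shows that three pairwise incident edges of a simple graph either all pass through one common vertex (forming a star $K_{1,3}$) or form a triangle $C_3$. It then remains to rule out the triangle possibility using the hypothesis that $T$ is odd.

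Suppose $e_a,e_b,e_c$ form a triangle on vertices $\{p,q,r\}$, say $e_a=pq$, $e_b=pr$, $e_c=qr$. Consider any vertex $v$ of $G$ with $v\notin T$; it corresponds to an edge $f\in E(H)$ with $f\notin\{e_a,e_b,e_c\}$. Because $H$ is simple, $f$ contains at most one vertex of $\{p,q,r\}$ (two would force $f$ to coincide with one of $e_a,e_b,e_c$), and one checks that $f$ is then incident to exactly $0$ of the edges $e_a,e_b,e_c$ if it misses all of $\{p,q,r\}$, and to exactly $2$ of them if it meets $\{p,q,r\}$ in a single vertex. Hence no vertex of $G$ is adjacent to an odd number of the vertices of $T$, contradicting that $T$ is an odd triangle. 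Therefore $e_a,e_b,e_c$ share a common vertex, i.e. the edges of $H$ corresponding to the vertices of $T$ form a star, as claimed. The only genuinely non-routine point is this last step — extracting the contradiction from oddness — and within it the case analysis counting how many of $e_a,e_b,e_c$ a given external edge meets; the rest is bookkeeping with the vertex/edge correspondence.
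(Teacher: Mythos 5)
Your proof is correct. Note that the paper offers no proof at all for this statement --- it is recorded as an Observation and simply asserted --- so there is no argument of the authors' to compare against; what you have done is supply the missing justification, and you have done so soundly. Parts (a) and (b) via the identification of $V(G)$ with $E(H)$ and the construction of $H_1$ as the (not necessarily induced) subgraph spanned by the corresponding edge set are exactly the standard bookkeeping one would expect. For part (c), your dichotomy --- three pairwise incident edges of a simple graph form either a star or a triangle --- together with the count showing that a triangle preimage forces every external edge to meet $0$ or $2$ of the three edges is precisely the dual of the argument the paper \emph{does} write out for Theorem~\ref{eventri} (there the authors assume the preimage of an \emph{even} triangle is a star and derive that $G$ must be one of $K_3$, $E_1$, $E_2$, $E_3$; you assume the preimage of an \emph{odd} triangle is a triangle and derive that every vertex sees it evenly). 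One small remark: the ``induced'' qualifier on $T$ is doing no work in your argument, nor does it need to --- any three mutually adjacent vertices form an induced triangle --- so nothing is lost there.
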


\begin{Theorem}\label{eventri} For an even induced triangle $T$ in $G \not\in \{K_3, E_1, E_2, E_3\}$, the edges corresponding to vertices of $T$ in $L^{-1}(G)$ constitute a triangle.
\end{Theorem}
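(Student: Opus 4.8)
\medskip
\noindent The plan is to argue contrapositively. Write $H=L^{-1}(G)$, which exists and is the unique connected preimage of $G$ by Whitney's theorem (Theorem~\ref{whitney}), since $G\ne K_3$; let $T=\{a,b,c\}$, with $e_a,e_b,e_c\in E(H)$ the corresponding edges. Since $a,b,c$ are pairwise adjacent in $G=L(H)$, the edges $e_a,e_b,e_c$ pairwise meet, and a short check (according as the three shared vertices coincide or not) shows that three pairwise-intersecting edges of a simple graph either share a common endpoint --- and then form a star $K_{1,3}$ --- or form a triangle. I would then show that if they form a star, then $G\in\{K_3,E_1,E_2,E_3\}$; hence under our hypothesis only the triangle alternative survives, which is the claim.

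So suppose $e_a=uu_1,\ e_b=uu_2,\ e_c=uu_3$ form a star centred at $u$. Here I would extract structure from the hypothesis that $T$ is \emph{even}, i.e.\ that no vertex of $G$ is adjacent to exactly one or to all three of $a,b,c$. First, any edge of $H$ incident with $u$ other than $e_a,e_b,e_c$, viewed as a vertex of $G$, is adjacent to all of $a,b,c$; so no such edge exists, whence $\deg_H(u)=3$ and $N_H(u)=\{u_1,u_2,u_3\}$. Second, for each $i$, an edge $u_iw$ with $w\notin\{u,u_1,u_2,u_3\}$, viewed as a vertex of $G$, is adjacent to exactly one of $a,b,c$; so no such edge exists and $N_H(u_i)\subseteq\{u,u_1,u_2,u_3\}$. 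Since $H$ is connected, these two facts force $V(H)=\{u,u_1,u_2,u_3\}$ with every edge other than $e_a,e_b,e_c$ joining two of $u_1,u_2,u_3$; hence $H$ is one of $K_{1,3}$, $K_{1,3}+x$ (one chord), the diamond ($K_{1,3}$ with two chords), or $K_4$.

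Finally I would compute the line graphs of these four graphs: $L(K_{1,3})=K_3$, $L(K_{1,3}+x)=E_1$, $L(\mathrm{diamond})=L(E_1)=E_2$ and $L(K_4)=E_3$ (the last three being the graphs of Figure~\ref{Fig3}, cf.\ Theorem~\ref{line:odd}), so indeed $G\in\{K_3,E_1,E_2,E_3\}$. I expect the only step needing real care to be the second one: for each kind of edge $H$ could carry at $u$ or among $u_1,u_2,u_3$, one must verify exactly how many vertices of $T$ the corresponding vertex of $G$ is adjacent to, so that ``$T$ even'' genuinely rules out every offending edge; this is routine bookkeeping once the star configuration is fixed.
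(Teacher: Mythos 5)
Your proposal is correct and follows essentially the same argument as the paper: assume the three edges form a star, use the evenness of $T$ to show every other edge of $H=L^{-1}(G)$ must join two leaves of that star, conclude $H\in\{K_{1,3},\,K_{1,3}+e,\,K_4-e,\,K_4\}$ by connectivity, and read off $G\in\{K_3,E_1,E_2,E_3\}$. Your version is somewhat more careful than the paper's (you separate the edges at the centre from the edges at the leaves and justify the star-versus-triangle dichotomy explicitly), but the route is the same.
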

\begin{proof}
Assume for the sake of contradiction that the edges in $L^{-1}(G)$, corresponding to the vertices of the even triangle $T$ in $G$, form a star $K_{1,3}$.

Now, as the triangle $T$ is even, any edge connected to the star in $L^{-1}(G)$ is between two of the vertices of the star; otherwise there is an edge from one of the vertices of star to a vertex outside star in $L^{-1}(G)$, the corresponding vertex in $G$ will be connected to exactly one or three vertices of the triangle $T$ which makes it odd - a contradiction. This implies there can not be more than $4$ vertices in $L^{-1}(G)$; otherwise  the star will be disconnected from those vertices. Hence, the only possible cases are $K_{1,3}$, $K_{1,3}+e$, $K_4 - e$ and $K_4$. The $L(G)$ for these are $K_3$, $E_1$, $E_2$ and $E_3$, which leads to contradiction. This completes the proof.
\end{proof}

\begin{Corollary}\label{Corr2} Let $G$ be a line graph and $G \neq K_3$. Then every even triangle in $G$ shares an edge with at least one other triangle.
\end{Corollary}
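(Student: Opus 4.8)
The plan is to split off the finitely many exceptional line graphs and then push the problem into the preimage. First I would check the statement by hand for $G \in \{E_1, E_2, E_3\}$ (the graphs of Figure~\ref{Fig3}): each has at most six vertices, and a direct inspection shows that in each of them every triangle shares an edge with another triangle, so the conclusion is immediate there. (This is also consistent with Theorem~\ref{line:odd}, which says these are precisely the line graphs containing two even triangles on a common edge.)

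So assume $G$ is a line graph with $G \notin \{K_3, E_1, E_2, E_3\}$, and let $T$ be an even triangle of $G$. By Theorem~\ref{eventri}, the three vertices of $T$ correspond in $H := L^{-1}(G)$ to three edges that themselves form a triangle; write these edges as $e_1 = uv$, $e_2 = vw$, $e_3 = wu$. After deleting isolated vertices of $H$ we may assume $H$ is connected, since $G$ is connected. Moreover $|E(H)| = |V(G)| \geq |V(T)| = 3$, and if $|E(H)| = 3$ then the connected graph $H$ would equal the triangle on $u,v,w$, giving $G = L(K_3) = K_3$, a contradiction; hence $|E(H)| \geq 4$. Since three vertices span at most three edges, $H$ has a vertex outside $\{u,v,w\}$, and because $H$ is connected there is an edge $f$ of $H$ joining $\{u,v,w\}$ to $V(H)\setminus\{u,v,w\}$. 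Say $f = ux$ with $x \notin \{u,v,w\}$; in particular $f \notin \{e_1,e_2,e_3\}$.

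It then remains only to read off a triangle of $G = L(H)$: the vertex $f$ is adjacent to $e_1 = uv$ and to $e_3 = wu$ because $f \cap e_1 = f \cap e_3 = \{u\} \neq \emptyset$, while $f \cap e_2 = \emptyset$ because neither $u$ nor $x$ lies in $\{v,w\}$. Hence $\{f, e_1, e_3\}$ is a triangle of $G$ that shares the edge $e_1 e_3$ — an edge of $T$ — with $T$, and it differs from $T$ since $f \notin \{e_1,e_2,e_3\}$. That is exactly the asserted conclusion. I expect the only delicate points to be the by-hand verification for $E_1, E_2, E_3$ and the justification that $H$ may be taken connected with at least four edges; once Theorem~\ref{eventri} is in hand, the rest is a routine computation inside $L(H)$.
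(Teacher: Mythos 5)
Your proof is correct and follows the route the paper intends: the corollary is stated without proof immediately after Theorem~\ref{eventri}, and your argument is exactly the natural derivation from it (pull the even triangle back to a triangle in $L^{-1}(G)$, use connectivity and $G\neq K_3$ to find a fourth edge meeting that triangle, and read off a second triangle of $G$ sharing an edge with $T$). The separate hand-check of $E_1,E_2,E_3$ is a point the paper glosses over but which its statement genuinely requires, since Theorem~\ref{eventri} excludes those graphs; your verification closes that small gap correctly.
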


ROUSSOPOULOS \cite{Rouss} gave a $\max\{m,n\}$ algorithm for determining the graph $H$ from its line graph $G$, where $m$ and $n$ are the number of edges and vertices, respectively. This algorithm also determines whether the given graph $G$ is a line graph or not. For a given line graph $G$, $H = L^{-1}(G)$ is constructed as follows: The vertices of $H$ correspond to the sets of $P$ together with the set of $W$ of vertices of $G$ belonging to only one of the cells of $P$, where $P$ is the set of vertices corresponding to cells of line partition of $G$. Two vertices in $H$ are adjacent whenever the corresponding sets have a non empty intersection.    

\section{Main results}

In this section, we continue to use the terms and notations defined in the previous section. We first prove the following result, which we use to prove the next result about forbidden subgraphs in second  order line graphs. 

\begin{Theorem} Let $G \neq K_3$ be a line graph of $H$ and $G'$ be an induced subgraph of $G$ such that $G' = L(H')$. Then $H'$ is an induced subgraph of $H$ if and only if there is no $v \in V(G) \setminus V(G')$ such that $v$ is adjacent to two vertices of $G'$ and both edges which join $v$ to two vertices of $G'$ are in different cells of the line  partition of subgraph induced by $G' \cup \{v\}$.
\end{Theorem}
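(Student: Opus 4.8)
The plan is to push everything over to the edge side of $H$. Write $G=L(H)$ and identify $V(G)$ with $E(H)$, so that $V(G')$ is a set $S\subseteq E(H)$; for $T\subseteq E(H)$ let $H[T]$ denote the subgraph of $H$ with edge set $T$ and vertex set the endpoints of its edges. As in Observation~\ref{SubInducedSub}, for any $v\in V(G)$ with corresponding edge $e_v\in E(H)$, the subgraph $G''$ of $G$ induced by $V(G')\cup\{v\}$ equals $L(H[S\cup\{e_v\}])$, and by Whitney's theorem (Theorem~\ref{whitney}) we may take the preimage $H'$ of $G'$ to be $H[S]$ itself. Then ``$H'$ is an induced subgraph of $H$'' means exactly that $H$ has no edge $e\in E(H)\setminus S$ with both endpoints in $V(H[S])$; call such an $e$ a \emph{chord} of $H[S]$. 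So the theorem becomes: $H[S]$ has a chord if and only if a vertex $v$ of the stated kind exists.

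I take the line partition of $G''$ to be the one inherited from the canonical Krausz partition of $G=L(H)$ (Theorem~\ref{Krausz}), whose cells are indexed by the vertices $w$ of $H$, the cell of $w$ being the set of edges at $w$; equivalently this is the canonical partition of $H[S\cup\{e_v\}]$. The only thing I need about it is: for $u\in V(G')$ adjacent to $v$, the edge $vu$ of $G''$ lies in exactly one cell, the cell of the unique common endpoint of $e_v$ and $u$ in $H$ (unique since $H$ is simple), and that endpoint is one of the two ends of $e_v$.

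Now the two implications, by contraposition. If $H[S]$ has a chord $e=ab\notin S$ with $a,b\in V(H[S])$, let $v$ be the vertex of $G$ named by $e$, so $v\notin V(G')$; pick $u_1\in S$ incident to $a$ and $u_2\in S$ incident to $b$ (possible since $a,b\in V(H[S])$). Then $u_1,u_2\in V(G')$ are adjacent to $v$, and $vu_1$ lies in the cell of $a$ while $vu_2$ lies in the cell of $b$, which differ because $a\neq b$; so a vertex $v$ of the stated kind exists. Conversely, if such a $v$ exists---adjacent to $u_1,u_2\in V(G')$ with $vu_1,vu_2$ in different cells of the partition of $G''$---then writing $e_v=ab$, each of $vu_1,vu_2$ sits in the cell of an end of $e_v$, and since the cells differ these ends must be $a$ and $b$ in some order; hence $u_1$ meets $a$ and $u_2$ meets $b$ in $H$, so $a,b\in V(H[S])$ while $e_v=ab\notin S$, i.e.\ $e_v$ is a chord of $H[S]$. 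This proves the equivalence.

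The cell-bookkeeping is routine; the care is needed, first, in reading ``the line partition of the subgraph induced by $G'\cup\{v\}$'' as the one inherited from $G=L(H)$ (a non-canonical line partition of $G''$---for instance when $G''$ contains an induced $K_3$ coming from a genuine triangle of $H$---could merge the two edges at $v$ into one cell and break the ``only if'' direction), and second, in using Whitney's theorem to identify $H'$ with $H[S]$, the usual $K_3$-preimage ambiguity being harmless here since $H[S]$ is a genuine subgraph of $H$.
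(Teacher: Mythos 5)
Your proof is correct, and its core mechanism is the same as the paper's: you translate ``$H'$ is induced in $H$'' into the non-existence of a chord edge $e=ab\in E(H)\setminus S$ with both ends in $V(H[S])$, and you identify that chord with the offending vertex $v$. This is exactly what the paper's sufficiency argument does (there the chord is called $e=v_1v_2$ and the two witnesses $e_1,e_2$ play the role of your $u_1,u_2$). Where you genuinely diverge is in the other direction and in the bookkeeping: the paper proves necessity by invoking Roussopoulos' reconstruction algorithm, arguing that $v_1,v_2$ lie in the set $W$ for $H'$ but the pairs $\{v,v_1\},\{v,v_2\}$ lie in $P$ for $H''$, whereas you argue both directions directly from the canonical Krausz partition of $L(H)$ (cells indexed by vertices of $H$), using only the fact that the edge $vu$ of $G''$ lives in the cell of the unique common endpoint of $e_v$ and $u$. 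Your route is more self-contained and makes explicit two points the paper leaves implicit: that ``the line partition of $G'\cup\{v\}$'' must be read as the one inherited from $L(H)$ (the theorem is otherwise ambiguous for the small exceptional graphs with non-unique Krausz partitions), and that the Whitney/$K_3$ ambiguity in choosing $H'$ is resolved by taking $H'=H[S]$. Your choice of $V(H[S])$ as exactly the endpoints of $S$ also cleanly disposes of the paper's slightly hand-waved claim that $v_1$ and $v_2$ are incident to edges of $H'$. The only quibble is cosmetic: in your closing remark it is the ``if'' direction (chord implies bad $v$) rather than the ``only if'' direction that a cell-merging non-canonical partition would threaten; the mathematics of your two contrapositives is nonetheless right.
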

\begin{proof} \textsf{Necessity:} Suppose $H'$ is an induced subgraph of $H$. If possible, assume that there is $v \in V(G) \setminus V(G')$ such that $v$ is adjacent to two vertices of $G'$ such that both edges which joins $v$ to $G'$ are in different cells of the line partition of subgraph induced by $G' \cup \{v\}$. Let $L(H'') = G' \cup \{v\}$ and $v$ is adjacent to $v_1$ and $v_2$. Using Algorithm given in \cite{Rouss}, it is clear that in $H'$, we have $v_1,v_2 \in W$ and hence the vertices corresponding to $v_1$ and $v_2$ are pending vertices while in $H''$, we have $\{v,v_1\},\{v,v_2\} \in P$ and hence the vertices corresponding to $\{v,v_1\}$ and $\{v,v_2\}$ are adjacent in $H$. Hence, we obtain $H'$ is not an induced subgraph of $H$ which is a contradiction. Therefore, our assumption is not true which completes the proof of necessity part.

\textsf{Sufficiency:} Suppose $G' = L(H')$ is an induced subgraph of $G$ and there is no $v \in V(G) \setminus V(G')$ such that $v$ is adjacent to two vertices of $G'$ and both edges which join $v$ to two vertices of triangle of $G'$ are in different cells of the line  partition of subgraph induced by $G' \cup \{v\}$. If possible, assume that $H'$ is not an induced subgraph of $H$. Since $H'$ is not an induced subgraph $H$, there exist two vertices $v_1$ and $v_2$ in $H'$ such that $v_1$ and $v_2$ are adjacent in $H$ but not in $H'$. Let $e$ be the edge in $H$ which joins $v_1$ and $v_2$. Since $G$ is connected graph, note that these two vertices $v_1$ and $v_2$ are incident to $x$ and $y$, respectively (possibly $x=y$). Let $v_1x = e_1$ and $v_2y = e_2$.  Note that $e \in V(L(H))$ but $e \not\in V(L(H')) = V(G')$. Hence, $e \in V(G) \setminus V(G')$ such that $e$ is adjacent to two vertices $e_1 \in V(G')$ and $e_2 \in V(G')$ of $L(H) = G$ and note that they are in different cells as the vertices $\{v_1\}$ and $\{v_2\}$ corresponding $e_1$ and $e_2$ are different, which is a contradiction. This completes the proof.
\end{proof}

A subgraph $G'$ of a line graph $G = L(H)$ is called a \emph{pure induced subgraph} if $G' = L(H')$ then $H'$ is an induced subgraph of $H$. 

\begin{Theorem}\label{line:L2NS} Let $G$ be any connected graph and $G$ is not isomorphic to any of the graphs $G_8,G_9,H_1,H_2$ and $H_3$. Then $G = L^2(H)$ for some graph $H$ if and only if the following holds:
\begin{enumerate}[\rm (a)]
\item Every triangle in $G$ is a part of some induced subgraph isomorphic to $K_4-e$.
\item $G$ does not contain any of the graphs $G_2-G_7$ as an induced subgraph and $L(G_2)-L(G_7)$ as a pure induced subgraph.
\end{enumerate}
\end{Theorem}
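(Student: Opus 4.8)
The plan is to apply the Beineke/\v{S}olt\'es description of line graphs twice, once to $G$ and once to its preimage. Observe first that, for connected $G$, one has $G=L^2(H)$ for some $H$ if and only if $G$ is a line graph \emph{and} its (essentially unique, by Theorem~\ref{whitney}) connected preimage $F:=L^{-1}(G)$ is itself a line graph: if $G=L(F)$ and $F=L(H')$ then $G=L^2(H')$, while any $H$ with $L^2(H)=G$ has $L(H)$ equal to $F$ because a connected line graph has a unique connected preimage ($G=K_3$ being the lone exception, treated directly). By Theorem~\ref{line_forb9}(e) and the hypothesis $G\not\cong G_8,G_9,H_1,H_2,H_3$, the statement ``$G$ is a line graph'' is the same as ``$G$ has no induced $G_1,\dots,G_7$''. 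Hence the task is to translate ``$F$ has no induced $G_1,\dots,G_7$ and $F$ is none of $G_8,\dots,H_3$'' into a condition on $G$, and to see that it matches (a) and (b). (I would also make explicit the exclusion of an induced $G_1=K_{1,3}$ from $G$ itself, which is part of $G$ being a line graph and which the stated hypotheses leave implicit; it is genuinely needed, e.g.\ to rule out $G=K_{1,4}$.)

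For $G_2,\dots,G_7$ this is exactly the content of the pure-induced-subgraph formalism. Each $G_i$ with $i\ge2$ is connected and not $K_3$ or $K_{1,3}$, so $L^{-1}(L(G_i))\cong G_i$ by Theorem~\ref{whitney}. If $F$ contains an induced $G_i$, then applying the line map yields an induced copy of $L(G_i)$ in $G=L(F)$ whose preimage $\cong G_i$ sits induced in $F$; by definition this induced $L(G_i)$ is a pure induced subgraph of $G$. Conversely, a pure induced subgraph of $G$ isomorphic to $L(G_i)$ has preimage $\cong G_i$ induced in $F$. Thus ``$F$ has no induced $G_2,\dots,G_7$'' $\iff$ ``$G$ has no pure induced $L(G_2),\dots,L(G_7)$'', the second clause of (b); and ``$G$ has no induced $G_2,\dots,G_7$'' (the first clause) together with ``no induced $K_{1,3}$'' is ``$G$ is a line graph''.

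The heart of the matter is that condition (a) is exactly ``$F$ has no induced $K_{1,3}=G_1$'' (for $G$ a line graph with $G\ne K_3$). Forward: if $F$ has an induced $K_{1,3}$ with centre $u$ and leaves $x,y,z$, then $T=\{ux,uy,uz\}$ is a triangle of $G$; a fourth vertex $w$ making $T$ into an induced $K_4-e$ would be an edge of $F$ adjacent to exactly two of $ux,uy,uz$ and not the third, which forces $w\in\{xy,yz,zx\}$, contradicting that the star is induced --- so (a) fails. Conversely, let $T$ be any triangle of $G$; its three vertices are pairwise-intersecting edges of $F$ and so form in $F$ either a star $K_{1,3}$ or a triangle (the elementary version of the analysis behind Theorem~\ref{eventri} and Corollary~\ref{Corr2}). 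If a star, it is not induced (since $F$ has no induced $K_{1,3}$), so some chord, say $xy$, is present in $F$; then the vertex $xy$ of $G$ is adjacent to exactly two vertices of $T$, and $T\cup\{xy\}$ induces $K_4-e$. If a triangle $\{x,y,z\}$ of $F$, then as $G\ne K_3$ the connected graph $F$ is not $C_3$, so some corner, say $x$, carries a further edge $xw$; then $xw$ is adjacent in $G$ to exactly two vertices of $T$, and $T\cup\{xw\}$ induces $K_4-e$. Either way $T$ lies in an induced $K_4-e$, so (a) holds.

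Combining the three steps gives the equivalence, once the degenerate and exceptional cases are cleared: $G=K_3$; the triangle-free case, where (a) is vacuous and a triangle-free line graph without an induced $K_{1,3}$ is forced to be a path or a cycle, hence a second-order line graph; and the possibilities $F\cong G_8,G_9,H_1,H_2,H_3$. I expect the last of these to be the main obstacle of the write-up: one must verify that each of $L(G_8),L(G_9),L(H_1),L(H_2),L(H_3)$ already contains a forbidden induced subgraph of (b) or violates (a), so that the corresponding $G$ is correctly excluded. Apart from that case analysis, the proof is a clean composition of Theorem~\ref{whitney}, Theorem~\ref{line_forb9}, the even/odd-triangle results, and the pure-induced-subgraph machinery.
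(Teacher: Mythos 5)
Your proposal follows essentially the same route as the paper's proof: apply the \v{S}olt\'es characterization (Theorem \ref{line_forb9}) twice, translate the absence of induced $G_2$--$G_7$ in the preimage $F=L^{-1}(G)$ into the absence of pure induced $L(G_2)$--$L(G_7)$ in $G$, and show that condition (a) is equivalent to $F$ being claw-free. Your treatment of the last equivalence (stars in $F$ give triangles of $G$ with no $K_4-e$ extension; induced-claw-freeness plus connectivity forces every triangle of $G$ into an induced $K_4-e$) is a fleshed-out version of the two-sentence argument the paper gives, and is correct.

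Where you go beyond the paper is in the bookkeeping, and here you have caught genuine defects rather than invented extra work. First, the paper's sufficiency argument asserts ``$G$ does not contain $G_1$--$G_7$'' although hypothesis (b) only lists $G_2$--$G_7$; as you note, the exclusion of an induced $K_{1,3}$ from $G$ does not follow from (a) (which is vacuous for triangle-free $G$), so $G=K_{1,3}$ or $K_{1,4}$ satisfies (a) and (b) but is not even a line graph. Second, the case $F\cong G_8,G_9,H_1,H_2,H_3$, which you defer as a verification, is not closable: these five graphs contain no induced $G_1$--$G_7$ (that is precisely why Theorem \ref{line_forb9}(e) must exclude them by name), so for $G=L(G_8)$ the preimage is claw-free, hence (a) holds by your own lemma, and (b) holds because $G$ is a line graph whose preimage contains no induced $G_2$--$G_7$; yet $G\neq L^2(H)$ for any $H$. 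The paper's proof silently skips this case. So the statement needs amending ($G_1$ and $K_3$ added to the exclusions, and $L(G_8),L(G_9),L(H_1),L(H_2),L(H_3)$ excluded explicitly), and your write-up, once you replace ``I expect one must verify'' by this explicit exclusion, is the correct repair; as a verification of the theorem exactly as printed, both your sketch and the paper's proof fall at the same two hurdles, but yours is the one that identifies them.
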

\begin{proof}\textsf{Necessity:} Suppose $G = L^2(H)$ for some graph $H$. Then, by Theorem \ref{whitney}, it is clear that (b) is satisfied. We now prove (a). Since $L^{-1}(G)$ is claw free, it is clear that $G$ has no triangle alone. Moreover, if $L^{-1}(G)$ contains triangle and as $G$ is connected, we have every triangle is a part of $K_4-e$.

\textsf{Sufficiency:} Suppose (a) and (b) hold for $G$. Since $G$ does not contain $G_1-G_7$ as an induced subgraph, by Theorem \ref{line_forb9} we obtain $G = L(H')$ for some graph $H'$. Since $G$ does not contain $L(G_1)-L(G_7)$ as pure induced subgraphs, $L^{-1}(G) = H'$ does not contain $G_1-G_7$ as induced subgraphs and hence $H' = L(H)$ by Theorem \ref{line_forb9}. Hence, we obtain, $G = L(H') = L(L(H)) = L^2(H)$ for some graph $H$.
\end{proof}

\begin{Theorem}\label{line:L2} Let $G$ be any connected graph and $G$ is not isomorphic to any of the graphs $K_3,G_8,G_9,H_1,H_2$ and $H_3$. Then $G = L^2(H)$ for some graph $H$ if the following holds:
\begin{enumerate}[\rm (a)]
\item Every triangle in $G$ is a part of some induced subgraph isomorphic to $K_4-e$.
\item $G$ does not contain any of the graphs $G_1-G_7$ (given in Figure \ref{Fig1}) and $F_1-F_2$ (given below in Figure \ref{Fig:F}) as an induced subgraph.
\begin{figure}[ht!]
\centering
\includegraphics[width=9cm]{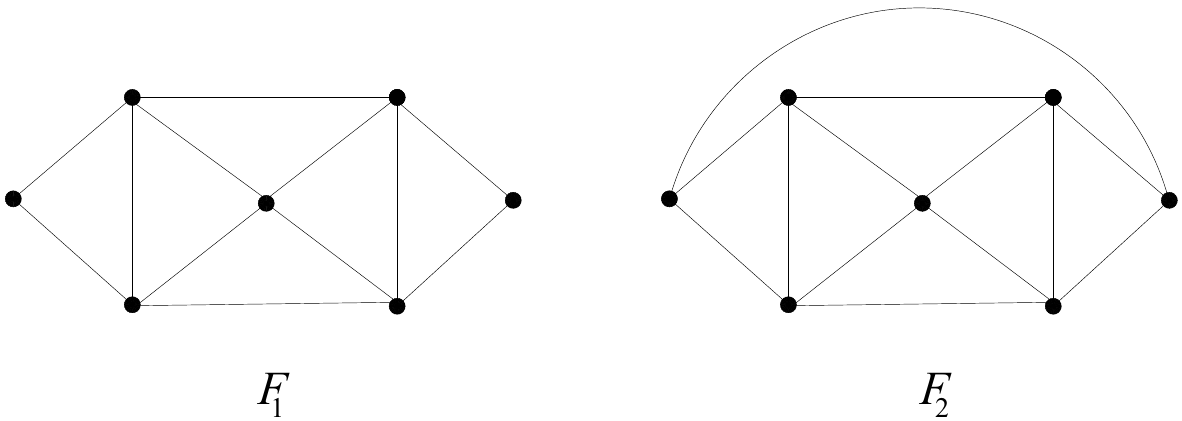}
\caption{Graphs $F_1=L(G_4)$ (left) and $F_2=L(G_2)$ (right).}\label{Fig:F}
\end{figure}
\end{enumerate}
\end{Theorem}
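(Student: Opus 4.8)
The plan is to peel off the two line–graph operations one at a time. First I would note that, since $G$ is connected, is not isomorphic to any of $K_3,G_8,G_9,H_1,H_2,H_3$, and (by hypothesis (b)) contains none of $G_1,\ldots,G_7$ as an induced subgraph, Theorem~\ref{line_forb9}(e) yields $G=L(H')$ for some connected graph $H'$. Everything then reduces to showing that $H'$ is itself a line graph, since that gives $H'=L(H)$ and hence $G=L(L(H))=L^{2}(H)$. Applying Theorem~\ref{line_forb9}(e) to $H'$, I must establish two things: (i) $H'$ has no induced $G_1,\ldots,G_7$, and (ii) $H'$ is not isomorphic to $G_8$, $G_9$, $H_1$, $H_2$ or $H_3$.

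For the claw case of (i) I would argue by contradiction: suppose $H'$ contained an induced $K_{1,3}$ with centre $v$ and independent leaves $a,b,c$. Then $va,vb,vc$ are pairwise adjacent vertices of $G=L(H')$, so they span a triangle $T$; by hypothesis (a), $T$ lies inside an induced $K_4-e$ of $G$, and since $va,vb,vc$ are already pairwise adjacent the one missing edge of that $K_4-e$ must be incident to its fourth vertex $w$, so $w$ is adjacent in $G$ to exactly two of $va,vb,vc$, say to $va$ and $vb$ but not to $vc$. Reading $w$ as an edge of $H'$: being disjoint from $vc$ forces $v\notin w$ and $c\notin w$, and then meeting both $va$ and $vb$ forces $w=\{a,b\}\in E(H')$, contradicting the independence of $\{a,b,c\}$. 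Hence $H'$ is claw-free. For the remaining cases $G_2,\ldots,G_7$ of (i) I would invoke the elementary fact that if $X$ is an induced subgraph of $H'$ then $L(X)$ is an induced subgraph of $L(H')=G$ (for edges lying inside an induced subgraph, sharing an endpoint in $H'$ is the same condition as sharing one in $X$, so $L(H')$ restricted to $E(X)$ is exactly $L(X)$). Thus an induced $G_i$ in $H'$ would produce an induced $L(G_i)$ in $G$. For $i=2$ and $i=4$ this is already impossible, since $L(G_2)=F_2$ and $L(G_4)=F_1$ are forbidden by (b). For $i\in\{3,5,6,7\}$ the plan is to check, by direct inspection of these finitely many graphs, that each of $L(G_3),L(G_5),L(G_6),L(G_7)$ contains $F_1$ or $F_2$ as an induced subgraph; then, since $G$ contains no induced $F_1$ or $F_2$, it contains no induced $L(G_i)$, and hence $H'$ has no induced $G_i$, completing (i).

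For (ii), if $H'$ were isomorphic to one of $G_8,G_9,H_1,H_2,H_3$, then $G=L(H')$ would be one of the five explicit finite graphs $L(G_8),L(G_9),L(H_1),L(H_2),L(H_3)$, and a direct check should show that each of these is either isomorphic to one of $K_3,G_8,G_9,H_1,H_2,H_3$, or contains one of $G_1,\ldots,G_7,F_1,F_2$ as an induced subgraph, or has a triangle lying in no induced $K_4-e$; in each case a hypothesis on $G$ is contradicted. With (i) and (ii) established, $H'$ is a line graph and $G=L^{2}(H)$.

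The step I expect to be the real obstacle is the verification for $G_3,G_5,G_6,G_7$: one has to fix Beineke's nine graphs and their line graphs precisely and actually exhibit an induced $F_1$ or $F_2$ inside each of $L(G_3),L(G_5),L(G_6),L(G_7)$ — this is exactly what legitimises replacing the ``no pure induced $L(G_2),\ldots,L(G_7)$'' hypothesis of Theorem~\ref{line:L2NS} by forbidding only the two induced subgraphs $F_1=L(G_4)$ and $F_2=L(G_2)$. The finite check behind (ii) is routine but must likewise be done in full.
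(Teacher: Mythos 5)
Your proposal follows essentially the same route as the paper's proof: apply Theorem~\ref{line_forb9}(e) to get $G=L(H')$, use hypothesis (a) to rule out an induced claw in $H'$ by exactly the argument the paper gives (the fourth vertex of the induced $K_4-e$ would have to be the edge $\{a,b\}$, contradicting independence of the leaves), and eliminate $G_2,\ldots,G_7$ via the induced copies of $F_1=L(G_4)$ and $F_2=L(G_2)$ they would force in $G$ --- your deferred finite inspection for $G_3,G_5,G_6,G_7$ succeeds precisely because each contains $G_2$ or $G_4$ as a subgraph, which is how the paper phrases the same step. The one substantive difference is that you also verify $H'$ is not isomorphic to any of $G_8,G_9,H_1,H_2,H_3$ before invoking Theorem~\ref{line_forb9}(e) a second time, a hypothesis of that theorem which the paper's proof silently omits to check; modulo carrying out that finite verification, your argument is the more complete one.
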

\begin{proof} Let $G$ be a connected graph that is not isomorphic to any of the graphs $K_3$, $G_8$, $G_9$, $H_1$, $H_2$, $H_3$, and $G$ satisfy the conditions (a) and (b) of the hypotheses. We prove that $G = L^2(H)$ for some graph $H$.

Note that, by Theorem \ref{line_forb9} (e), it suffices to prove that $L^{-1}(G)$ does not contain any of the graphs $G_1-G_7$ as an induced subgraph as this ensures the existence of a graph $H$ such that $L^{-1}(G)=L(H)$. Since $G$ does not contain $F_1=L(G_4)$ and  $F_2=L(G_2)$ as an induced subgraph, $L^{-1}(G)$ does not contain $G_4$ and $G_2$ as a subgraph. Since $G$ does not contain $F_1$ as an induced subgraph, $L^{-1}(G)$ does not contain $G_4$ as a subgraph of $G$ and hence $G$ does not contain $G_5$, $G_6$ and $G_7$ as an induced subgraph as $G_4$ is a subgraph of $G_5$, $G_6$ and $G_7$. Similarly, $G$ does not contain $F_2$ as an induced subgraph implies $L^{-1}(G)$ does not contain $G_2$ as a subgraph. Hence $G$ does not contain $G_3$ as an induced subgraph because $G_3$ contains $G_2$ as a subgraph. Now it remains to show that $L^{-1}(G)$ does not contain $G_1 = K_{1,3}$.

Suppose $L^{-1}(G)$ contains $G_1 = K_{1,3}$ as an induced subgraph. Let $V(K_{1,3}) = \{a,b,c,d\}$ with $E(K_{1,3}) = \{ab,ac,ad\}$. Since $K_{1,3}$ is an induced subgraph of $L^{-1}(G)$, it is clear that no two vertices of $\{b,c,d\}$ are mutually adjacent. Now, as there is no edge that is connected to exactly two of the edges of $K_{1,3}$, in $L(H)$, there is no vertex connected to exactly two of the vertices of triangle formed by vertices corresponding to the edges $ab, ac$ and $ad$. This implies the triangle is not a part of $K_4-e$ in $G$, which is a contradiction with (a). Hence, $L^{-1}(G)$ does not contain $K_{1,3}$ which completes the proof.
\end{proof}

\begin{Theorem}\label{line:L2N} Let $G$ be any connected graph and $G$ is not isomorphic to any of the graphs $K_3,G_8,G_9,H_1,H_2$ and $H_3$. If $G = L^3(H)$ for some graph $H$, then every odd triangle in $G$ is a part of some induced subgraph isomorphic to $K_4-e$ and every even triangle in $G$ is a part of some induced subgraph isomorphic to $L(K_4-e)$.
\end{Theorem}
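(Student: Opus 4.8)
The plan is to descend through the two iterated preimages of $G$. Write $H_1 := L^2(H)=L^{-1}(G)$ and $H_2 := L(H)=L^{-1}(H_1)$, so that $G=L(H_1)$ and $H_1=L(H_2)$; since $H_1$ and $H_2$ are themselves line graphs, both are claw-free (Theorem~\ref{Beineke9}), and $G\neq K_3$ forces $H_1\neq K_3$ and $H_2\neq K_3$. The elementary fact I will use throughout is that if $K$ is any subgraph of a graph $M$, then $L(K)$ is an \emph{induced} subgraph of $L(M)$: two edges of $K$ share a vertex in $M$ exactly when they share a vertex in $K$. Consequently, to exhibit an induced $K_4-e$ in $G$ it suffices to exhibit a $K_4-e$ inside $H_1$, and to exhibit an induced subgraph of $G$ isomorphic to $L(K_4-e)$ it again suffices to exhibit a $K_4-e$ inside $H_1$ and pass to its line graph. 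Before the main argument I would clear away the possibility $G\in\{E_1,E_2,E_3\}$: the graphs $E_1\cong K_4-e$ and $E_3\cong K_{2,2,2}$ are not third-order line graphs (their second iterated preimages $L^{-2}(E_1)$ and $L^{-2}(E_3)\cong K_{1,4}$ each contain an induced $K_{1,3}$), so they do not arise; and if $G=E_2=L(E_1)\cong L(K_4-e)$ the statement is immediate, since this graph has no odd triangle and each of its even triangles lies in the induced subgraph $G$ itself. So from now on assume $G\notin\{K_3,E_1,E_2,E_3\}$, which makes Theorem~\ref{eventri} applicable.

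Now let $T$ be an \emph{odd} triangle of $G$. By Observation~\ref{SubInducedSub}(c), its three vertices, regarded as edges of $H_1$, form a star: there are a vertex $u$ of $H_1$ and distinct neighbours $v_1,v_2,v_3$ of $u$ in $H_1$ such that the vertices of $T$ are $uv_1,uv_2,uv_3$. Since $H_1$ is claw-free, $\{u,v_1,v_2,v_3\}$ does not induce $K_{1,3}$, hence some $v_iv_j$ is an edge of $H_1$; the corresponding vertex $g:=v_iv_j$ of $G$ is adjacent to $uv_i$ and $uv_j$ but not to $uv_k$ (because $\{v_i,v_j\}\cap\{u,v_k\}=\emptyset$), and $g$ is not incident to $u$, so it differs from all three vertices of $T$. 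Therefore $V(T)\cup\{g\}$ induces a $K_4-e$ in $G$, containing $T$.

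Next let $T$ be an \emph{even} triangle of $G$. By Theorem~\ref{eventri}, its three vertices, regarded as edges of $H_1$, are precisely the three edges of a triangle $T'=f_1f_2f_3$ of $H_1$; since the vertices of $T$ are the edges of $T'$, it is enough to place $T'$ inside a $K_4-e$ of $H_1$ and then take line graphs. I would split on the parity of $T'$ in $H_1$. If $T'$ is even, then, $H_1$ being a line graph $\neq K_3$, Corollary~\ref{Corr2} gives another triangle of $H_1$ sharing an edge, say $f_1f_2$, with $T'$; its third vertex $h$ is adjacent to $f_1$ and hence, by evenness of $T'$, to exactly two of $f_1,f_2,f_3$, necessarily $f_1$ and $f_2$, so $h\not\sim f_3$ and $\{f_1,f_2,f_3,h\}$ induces $K_4-e$ in $H_1$. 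If instead $T'$ is odd, apply Observation~\ref{SubInducedSub}(c) to $H_1=L(H_2)$: the vertices $f_1,f_2,f_3$, regarded as edges of $H_2$, form a star $rs_1,rs_2,rs_3$, and since $H_2$ is claw-free some $s_is_j$ is an edge of $H_2$; then $h:=s_is_j$ is a vertex of $H_1$ adjacent to $f_i$ and $f_j$ but not to $f_k$, so again $\{f_1,f_2,f_3,h\}$ induces $K_4-e$ in $H_1$. In either case the line graph of this $K_4-e$ is an induced subgraph of $G$ isomorphic to $L(K_4-e)$ and containing $T$, completing the argument.

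The main obstacle I anticipate is the even-triangle case when the preimage triangle $T'$ is itself \emph{even} in $H_1$: there one must invoke the line-graph-specific results (Corollary~\ref{Corr2}, or equivalently Corollary~\ref{cor:even} together with Theorem~\ref{line:odd}) and argue carefully about how many vertices of $T'$ a given neighbour can touch. The odd-$T'$ sub-case is the place where the hypothesis $G=L^3(H)$---and not merely $G=L^2(H)$---is genuinely needed, via claw-freeness of the third preimage $H_2$. The only remaining delicacy is the correct treatment of the small graphs $K_3,E_1,E_2,E_3$ so that Theorem~\ref{eventri} may be used.
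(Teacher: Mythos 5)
Your proof is correct and follows essentially the same route as the paper's: odd triangles are handled by pulling back to a star in $L^{-1}(G)$ and using claw-freeness of that preimage, while even triangles are pulled back via Theorem~\ref{eventri} to a triangle of $L^{-1}(G)$ that is then enclosed in a $K_4-e$ whose line graph supplies the required induced $L(K_4-e)$. The only difference is one of detail rather than of method: where the paper cites Theorem~\ref{line:L2NS}(a) as a black box and leaves unmentioned the exceptional graphs $E_1$, $E_2$, $E_3$ that must be excluded before Theorem~\ref{eventri} can be applied, you unfold the argument explicitly (including the parity split on the preimage triangle, using Corollary~\ref{Corr2} in the even--even subcase) and dispose of those exceptions, which is a welcome tightening of the paper's own proof.
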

\begin{proof} Since $G = L^3(H) = L^2(L(H))$ for some graph $H$, the graph follows the condition stated in Theorem \ref{line:L2NS} (a), implying every triangle, including odd triangle, is part of an induced subgraph isomorphic to $K_4-e$. Now, consider an even triangle in graph $G$. Let the vertices of the even triangle be $a, b, c$. Now, by Theorem \ref{eventri}, in $L^{-1}(G)$, the corresponding edges to $a, b, c$ will be a part of triangle. Now, as $L^{-1}(G)=L^2(H)$, by Theorem \ref{line:L2NS} (a), the triangle is a part of an induced subgraph isomorphic to $K_4-e$, say $A$. So, $L(A)=L(K_4-e)$, which is an induced subgraph of $L^3(H)$. Therefore, every even triangle in $G$ is a part of some induced subgraph isomorphic to $L(K_4-e)$.
\end{proof}

\begin{Theorem}\label{lin:higher} Let $G$ be any connected graph and $G$ is not isomorphic to any of the graphs $K_3,G_8,G_9,H_1,H_2$ and $H_3$. Then $G = L^n(H), n \geq 2$ for some graph $H$ if the following holds:
\begin{enumerate}[\rm (a)]
\item $L^{-i}(G)$ is claw free for all $i \in \{1,2,....,n-1\}$
\item $G$ does not contain any of the graphs $G_1-G_7$ (given in Figure \ref{Fig1}) and $F_1-F_2$ (given in Figure \ref{Fig:F}) as an induced subgraph.
\end{enumerate}
\end{Theorem}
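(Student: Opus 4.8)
The plan is to reduce everything to Beineke's characterisation, using a single closure property of the line‑graph operation. Note that hypothesis~(a) is only meaningful if $L^{-1}(G),\dots,L^{-(n-1)}(G)$ all exist (and are claw‑free), so we may take this for granted; then $G=L^{n-1}\!\big(L^{-(n-1)}(G)\big)$, and $L^{-n}(G)$ exists — i.e.\ $G=L^{n}(H)$ with $H=L^{-n}(G)$ — precisely when $G^{\ast}:=L^{-(n-1)}(G)$ is itself a line graph. Since $G^{\ast}$ is connected (it is $L^{-(n-1)}$ of a connected graph, and no intermediate preimage is $K_{3}$ because otherwise $G=L^{i}(K_{3})=K_{3}$, contrary to hypothesis), Theorem~\ref{line_forb9}(e) reduces the problem to showing that $G^{\ast}$ contains none of $G_{1},\dots,G_{7}$ as an induced subgraph and is not isomorphic to any of $G_{8},G_{9},H_{1},H_{2},H_{3}$.

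The closure property I would record is that adjacency in a line graph is \emph{intrinsic} to the pair of edges: if $D$ is a subgraph (not necessarily induced) of $C$, then $L(D)$ is an \emph{induced} subgraph of $L(C)$, since two edges are adjacent in any line graph exactly when they share a vertex, irrespective of the ambient graph — this is essentially Observation~\ref{SubInducedSub}(b). Iterating, $D\subseteq C$ yields $L^{j}(D)\subseteq_{\mathrm{ind}}L^{j}(C)$ for all $j$; in particular a graph $X$ with $X\subseteq L(X)$ satisfies $L(X)\subseteq_{\mathrm{ind}}L^{j}(X)$ for every $j\ge1$, which lets any induced occurrence of $G_{4}$ (resp.\ $G_{2}$) far down the preimage chain be pushed back up to an induced copy of $F_{1}=L(G_{4})$ (resp.\ $F_{2}=L(G_{2})$) in $G$.

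Carrying this out: $G_{1}=K_{1,3}$ is excluded in $G^{\ast}$ directly by hypothesis~(a) with $i=n-1$. Since $G_{4}$ is a subgraph of each of $G_{5},G_{6},G_{7}$ and $G_{2}$ is a subgraph of $G_{3}$, it suffices to rule out $G_{2}$ and $G_{4}$ even as (possibly non‑induced) subgraphs of $G^{\ast}$. If $G_{4}\subseteq G^{\ast}$, then applying $L$ a total of $n-1$ times gives
\[
G=L^{n-1}(G^{\ast})\;\supseteq_{\mathrm{ind}}\;L^{n-1}(G_{4})\;\supseteq_{\mathrm{ind}}\;L(G_{4})=F_{1},
\]
where the last inclusion uses $G_{4}\subseteq L(G_{4})$; this contradicts~(b), and the analogous computation handles $G_{2},\dots,G_{7}$. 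Finally, if $G^{\ast}$ were isomorphic to one of $G_{8},G_{9},H_{1},H_{2},H_{3}$, then $G_{2}$ or $G_{4}$ would be a subgraph of $G^{\ast}$ — for $H_{1},H_{2},H_{3}$ because each of them contains $G_{8}$ as an induced subgraph, a fact implicit in the chain of equivalences in Theorem~\ref{line_forb9} — and the same push‑up argument again produces $F_{1}\subseteq_{\mathrm{ind}}G$ or $F_{2}\subseteq_{\mathrm{ind}}G$, contradicting~(b). Hence $G^{\ast}$ is a line graph and $G=L^{n}(H)$.

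The step I expect to be the main obstacle is verifying the concrete combinatorial inputs on which this rests: the embeddings $G_{2}\subseteq L(G_{2})$ and $G_{4}\subseteq L(G_{4})$, and the containment of $G_{2}$ or $G_{4}$ as a (not necessarily induced) subgraph in each of $G_{8},G_{9},H_{1},H_{2},H_{3}$. These are finite checks against Figures~\ref{Fig1} and~\ref{Fig2}, but they are precisely the statements asserting that the forbidden family $\{G_{1},\dots,G_{7},F_{1},F_{2}\}$ (together with claw‑freeness of the preimages) is stable under pulling a line graph back, and getting them exactly right — including the Whitney $K_{3}$/$K_{1,3}$ bookkeeping wherever an implicit $L^{-1}$ is taken, and a separate verification for $G_{9}$, which need not contain $G_{8}$ — is the delicate part of the argument.
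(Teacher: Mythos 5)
Your argument is correct in outline but takes a genuinely different route from the paper. The paper proceeds by induction on $n$: the case $n=2$ is Theorem~\ref{line:L2}, and in the inductive step it verifies that $L^{-1}(G)$ again satisfies hypotheses (a) and (b) for $n-1$ (claw-freeness is inherited from (a); absence of $G_2$--$G_7$ in $L^{-1}(G)$ follows from absence of $F_1,F_2$ in $G$; absence of $F_1,F_2$ in $L^{-1}(G)$ follows because $F_1$ is an induced subgraph of $L(F_1)$ and of $L(F_2)$), and then applies the induction hypothesis to $L^{-1}(G)$. You instead work in one shot at the bottom of the preimage chain: you show that $G^{\ast}=L^{-(n-1)}(G)$ satisfies \v{S}olt\'es's condition (e) of Theorem~\ref{line_forb9} directly, using the push-forward fact that a (not necessarily induced) subgraph $D\subseteq C$ gives $L(D)$ as an induced subgraph of $L(C)$, iterated along the chain, together with the embeddings $G_2\subseteq L(G_2)$ and $G_4\subseteq L(G_4)$ to lift any deep occurrence of a forbidden configuration up to an induced copy of $F_1$ or $F_2$ in $G$. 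Your route avoids invoking the separate $n=2$ theorem, and it explicitly handles something the paper's induction passes over: that $G^{\ast}$ (respectively $L^{-1}(G)$ in the paper's step) is not isomorphic to $K_3$, $G_8$, $G_9$, $H_1$, $H_2$, $H_3$, which is needed before the induced-subgraph criterion can be applied. The price is a slightly larger collection of finite checks; the paper's induction needs only the single fact that $F_1$ sits inside $L(F_1)$ and $L(F_2)$.

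Two caveats. First, the finite facts you defer are genuinely load-bearing, and they do hold: $G_4\subseteq L(G_4)$ and $G_2\subseteq L(G_2)$ (both already sit inside $L(K_4-e)$, since the five vertices of $L(K_4-e)$ contain a $K_4-e$ whose two nonadjacent vertices have a common neighbour, and in $L(G_4)$ the two pendant edges of $G_4$ supply the pendant vertices), and each of $G_8$ and $G_9$ contains $G_4$ or $G_2$ as a subgraph, so the isomorphism cases are indeed eliminated by your push-up argument; until these checks are written out, though, your proof is a correct skeleton rather than a complete proof. Second, your justification for $H_1,H_2,H_3$ is slightly off: the equivalences in Theorem~\ref{line_forb9} force $H_1$ to contain $G_8$ as an induced subgraph, but $H_2$ and $H_3$ contain $G_9$, not necessarily $G_8$; this is harmless, since $G_9$ also contains $G_4$ as a subgraph, but the separate verification for $G_9$ that you flag must actually be carried out.
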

\begin{proof} We give proof using induction. Now, the statement holds for $n=2$ as proved in Theorem \ref{line:L2}.

For any graph $G$, let the statement hold for $k \in \{3,...,n-1\}$. We now prove that the statement holds for $k=n$. Let a graph $G$ satisfy conditions (a) and (b) stated in the hypothesis. Now, as graph $G$ does not contain $G_1-G_7$, $G$ is a line graph by Theorem \ref{line_forb9}. Since $G$ does not contain $F_1-F_2$, $L^{-1}(G)$ does not contain $G_2-G_7$ as subgraphs by observation \ref{SubInducedSub} and hence $L^{-1}(G)$ does not contain $G_2-G_7$ as an induced subgraph. As $L(F_1)$ and $L(F_2)$ contain $F_1$ as an induced subgraph, $G$ does not contain $L(F_1)$ and $L(F_2)$ as an induced subgraph. Hence, $L^{-1}(G)$ does not contain $F_1$ and $F_2$ as subgraphs by observation \ref{SubInducedSub}. Also, $L^{-i}(G)$ is claw free for all $i \in \{1,2,...,k-1\}$. So, by induction hypothesis, $L^{-1}(G)=L^{n-1}(H)$ for some graph $H$. Therefore, $G=L^{n}(H)$. Therefore, by induction, the theorem is true for all $n$.
\end{proof}

\begin{Theorem}\label{LD3} Let $G$ be a connected graph with $\Delta(G) = 3$ and $G \neq G_2$. Then
\begin{enumerate}[\rm (a)]
\item $G = L(H)$ for some graph $H$ if and only if $G$ does not contain any of $G_1, G_4, G_7$ (shown in Figure 2) as an induced subgraph.
\item $G = L^2(H)$ for some graph $H$ if and only if $G$ is isomorphic to $L_{k,n}$, where $k \in \{1,2\}$ and $n \geq k$ as shown in Figure \ref{Fig:Types}.
\begin{figure}[ht!]
\centering
\includegraphics[width=9cm]{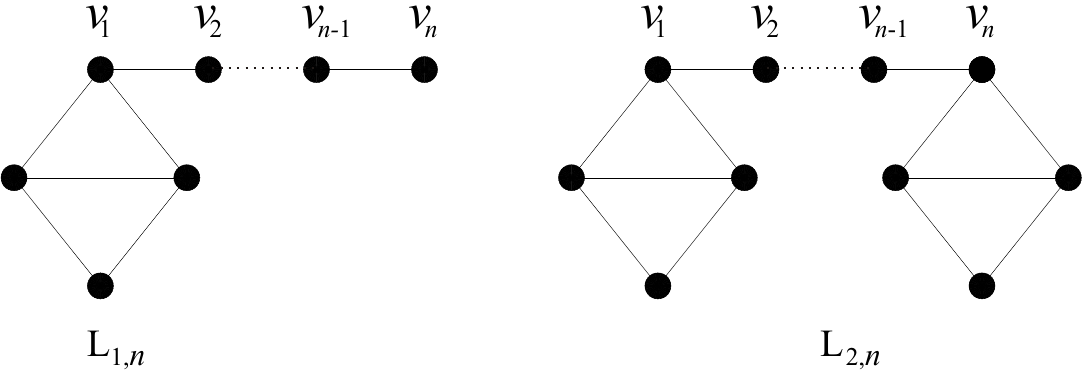}
\caption{Two classes of line graphs $G = L^2(H)$ with $\Delta(G) = 3$.}\label{Fig:Types}
\end{figure}
\item For $n \geq 3$, $G \neq L^n(H)$ for any graph $H$.
\end{enumerate}
\end{Theorem}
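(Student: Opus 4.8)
The overall strategy is to dispatch part~(a) directly from Beineke's Theorem~\ref{Beineke9} together with a bounded case check around $G_2$, to prove part~(b) by analysing the preimage $L^{-2}(G)$ through the degree identity $\deg_{L(X)}(uv)=\deg_X(u)+\deg_X(v)-2$, and then to obtain part~(c) as a short consequence of~(b).

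For part (a), the forward implication is immediate from Theorem~\ref{Beineke9}. For the converse, inspecting the nine graphs in Figure~\ref{Fig1} shows that each of $G_3,G_5,G_6,G_8,G_9$ has a vertex of degree at least $4$, so none of them can be an induced subgraph of a graph with $\Delta=3$; hence, by Theorem~\ref{Beineke9}, a connected graph $G$ with $\Delta(G)=3$ is a line graph if and only if it contains none of $G_1,G_2,G_4,G_7$ as an induced subgraph. Since $G\neq G_2$ is assumed, what remains is the finite statement: a connected graph $G$ with $\Delta(G)=3$ that contains $G_2$ as an induced subgraph but no induced $K_{1,3}=G_1$, $G_4$ or $G_7$ must equal $G_2$. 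I would argue by contradiction: if $G\supsetneq G_2$, connectedness yields a vertex $v\notin V(G_2)$ adjacent to $V(G_2)$, and $G[V(G_2)\cup\{v\}]$ is connected, has maximum degree at most $3$, and still contains $G_2$ as an induced subgraph; a finite case analysis over the admissible ways of joining $v$ to a nonempty set of at most three vertices of $G_2$ shows that every one of them creates an induced $G_1$, $G_4$ or $G_7$, which $G$ then inherits --- a contradiction.

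For part (b), write $J=L(H)$ so $G=L(J)$, and (ignoring isolated vertices and edgeless components) assume $H$ is connected. The identity $\deg_{L(X)}(uv)=\deg_X(u)+\deg_X(v)-2$ turns $\Delta(G)=3$ into the key inequality $\deg_J(x)+\deg_J(y)\le 5$ for every edge $xy$ of $J$. First I would transfer this to $H$: a vertex of degree at least $4$ in $H$ would create a $K_4$ in $J$ containing two adjacent vertices of degree at least $3$, which is forbidden, so $\Delta(H)\le 3$; and if $H$ contained a cycle it would either be a cycle (forcing $\Delta(G)=2$) or carry a degree-$3$ vertex on a cycle with two neighbours of degree at least $2$, again forbidden --- hence $H$ is a tree with $\Delta(H)\le 3$. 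Next, the inequality forces every degree-$3$ vertex of $H$ to have at most one neighbour of degree at least $2$, hence at least two leaf neighbours; since a tree with maximum degree $3$ having $B$ vertices of degree $3$ has exactly $B+2$ leaves, and distinct degree-$3$ vertices have disjoint sets of leaf neighbours, we get $2B\le B+2$, so $B\le 2$, while $B\ge 1$ (otherwise $H$ is a path and $\Delta(G)\le 2$). A short argument then shows: if $B=1$, $H$ is $K_{1,3}$ with one leg subdivided into a path of length at least $2$; if $B=2$, $H$ consists of two such ``chairs'' joined by a path of length at least $3$ (a shorter joining path violates the inequality). Computing $L^2$ of these trees gives exactly the graphs $L_{1,n}$ ($n\ge 1$) and $L_{2,n}$ ($n\ge 2$) of Figure~\ref{Fig:Types}, and the converse direction of (b) is then immediate, each $L_{k,n}$ being realised as $L^2$ of the tree produced by this analysis.

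For part (c), suppose $G=L^n(H)$ with $n\ge 3$ and set $J=L^{n-1}(H)$, so $G=L(J)$; since $n-1\ge 2$, $J$ is a second-order line graph --- in particular a line graph, so it has no induced $K_{1,3}$ --- and it is connected because $G$ is. As in (b), $\Delta(G)=3$ gives $\deg_J(x)+\deg_J(y)\le 5$ on every edge of $J$; a vertex of degree at least $4$ in $J$ would then have only leaf neighbours and would yield an induced $K_{1,3}$ in the line graph $J$, which is impossible, while $\Delta(J)\le 2$ would make $J$ a path or cycle and force $\Delta(G)=\Delta(L(J))\le 2$. Hence $\Delta(J)=3$, so by part (b) $J\cong L_{1,n'}$ or $J\cong L_{2,n'}$; but each of these graphs contains an induced $K_4-e$, whose two degree-$3$ vertices stay of degree $3$ in $J$ and are adjacent, so $J$ has an edge $xy$ with $\deg_J(x)=\deg_J(y)=3$ and therefore $\deg_G(xy)=3+3-2=4$, contradicting $\Delta(G)=3$. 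The main obstacle, I expect, is the structural analysis of the tree $H$ in part (b): one must carefully eliminate every other placement of the degree-$3$ vertices and every too-short connecting path and then carry out the $L^2$ computations to pin down the two families; by comparison, (a) is a bounded case check and (c) is a short deduction from (b).
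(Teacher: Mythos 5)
Your proposal is correct, and for parts (b) and (c) it takes a genuinely different route from the paper. For (a) you do essentially what the paper does: eliminate $G_3,G_5,G_6,G_8,G_9$ by their maximum degrees and then check that any proper connected extension of $G_2$ within maximum degree $3$ creates a forbidden induced subgraph (the paper observes that a new vertex can only attach to the unique degree-$2$ vertex of $G_2$, which immediately yields an induced $K_{1,3}$, so your finite case analysis collapses to one case). For (b), however, the paper argues forward on $G$ itself: it uses claw-freeness and Theorem \ref{line:L2NS} (every triangle lies in a $K_4-e$) to grow $G$ outward from a degree-$3$ vertex, with a claim that at least one of the two degree-$2$ vertices of each $K_4-e$ must remain of degree $2$. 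You instead work entirely in the preimage: the identity $\deg_{L(X)}(uv)=\deg_X(u)+\deg_X(v)-2$ converts $\Delta(G)=3$ into the edge inequality $\deg_J(x)+\deg_J(y)\le 5$ on $J=L(H)$, which you push down to show that $H$ is a tree with $\Delta(H)\le 3$ and at most two branch vertices, each carrying two leaves and (if there are two of them) joined by a path of length at least $3$. This buys two things: it classifies the preimages $H$ explicitly rather than only the graphs $G$, and it corrects a small slip in the paper's sufficiency step --- the paper exhibits $H$ as a triangle with a pendant path, but $L^2$ of that graph already contains $L(K_4-e)$ and hence a vertex of degree $4$; the correct $L^2$-preimage of $L_{k,n}$ is exactly your subdivided claw (one application of $L$ earlier). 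The mild cost is that your argument leans on the hypothesis $G=L^2(H)$ throughout, whereas the paper's forward analysis manipulates $G$ alone. For (c) both proofs reduce to the same observation that the relevant preimage contains a $K_4-e$ with two adjacent degree-$3$ vertices, forcing a degree-$4$ vertex one level up; your detour through part (b) applied to $J=L^{n-1}(H)$ is legitimate (since $J$ is a line graph it cannot equal $G_2$, so (b) applies) and is airtight, if slightly longer than the paper's direct version. The only places where you still owe details are the attachment check in (a) and the explicit $L^2$ computations closing (b); neither is a gap in the approach.
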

\begin{proof} (a) \textsf{Necessity:} Suppose $G$ is a line graph with $\Delta(G) = 3$ and $G \neq G_2$. By Theorem \ref{Beineke9}, it is clear that $G$ does not contain any of the $G_1,G_4,G_7$ as an induced subgraph.

\textsf{Sufficiency:} Suppose $G$ is not $G_2$ and $G$ does not contain any of $G_1,G_4,G_7$ as an induced subgraph. Since $\Delta(G_3) = \Delta(G_5) = \Delta(G_8) = 4$ and $\Delta(G_6) = \Delta(G_9) = 5$, $G$ does not contain any of $G_3,G_5,G_6,G_8,G_9$ as an induced subgraph. We now prove that $G$ does not contain $G_2$ as an induced subgraph. Suppose $G$ contains $G_2$ as an induced subgraph. Since $G \neq G_2$, there is a vertex in $G$ which is adjacent to degree $2$ vertex of $G_2$ and not adjacent to any other vertex of $G_2$. Then observe that $G$ contains $K_{1,3}$ as an induced subgraph which is a contradiction. Hence $G$ does not contain $G_2$ as an induced subgraph. By Theorem \ref{Beineke9}, $G$ is a line graph.

(b) \textsf{Necessity:} Since $G$ is simple and $\Delta(G) = 3$, it is clear that $|V(G)| \geq 4$ and hence $G \neq K_3$. Moreover, observe that $G$ does not contain $K_n$ for $n \geq 5$ as an induced subgraph; otherwise $\Delta(G) \geq 4$. Also, $G$ does not contain $K_4$; otherwise $L^{-1}(G)$ contains $K_{1,3}$ which is a contradiction and there are no possible connections with vertices of $K_4$ as $\Delta(G)=3$. Since $\Delta(G) = 3$, let $u$ be the vertex with $d(u) = 3$. Assume that $u$ is adjacent to $v_1, w$ and $x$. Since $G$ does not contain  $K_{1,3}$ as an induced subgraph, $u$ forms a triangle with two of $\{v_1,w,x\}$. Assume $u,w$ and $x$ forms a triangle. By Theorem \ref{line:L2NS}, note that every triangle is a part of $K_4 - e$. Now we have the following two cases.

\textsf{Case-1:} $v_1$ is adjacent to $w$. Observe that $u$ and $w$ are not adjacent to any other vertex of $G$ as $\Delta(G) = 3$ and $d(u) = d(w) = 3$. Moreover, note that $v_1$ and $x$ are not adjacent; otherwise $G$ is isomorphic to $K_4$ (outer connections are not possible as $\Delta(G)=3$) which is a contradiction.

\textsf{Claim-1:} At least one of $\{v_1,x\}$ has degree $2$.

If possible, then assume that $d(v_1) = d(x) = 3$. In this case, we have the following possibilities: (i) Let $v_2$ be the vertex adjacent to both $v_1$ and $x$ then note that $G$ contains $G_2$ - a contradiction with $G$ being a line graph, (ii) Let $v_2$ and $y$ be two vertices adjacent to $v_1$ and $x$, respectively. If $v_2$ and $y$ are adjacent then $G$ contains $G_7$ - a contradiction with $G$ being a line graph. If $v_2$ and $y$ are not adjacent then $G$ contains $G_4$ as an induced subgraph - a contradiction with $G$ being a line graph. Hence, our assumption $d(v_1) = d(x) = 3$ is not true. Therefore, at least one of $v_1$ and $x$ has degree $2$.

Without loss of generality, assume that $d(x) = 2$. If $d(v_1) = 2$ then $G$ is $K_4 - e = L_{1,1}$. If $d(v_1) = 3$ then let $v_2$ is adjacent to $v_1$. If $d(v_2) = 1$ then $G$ is $L_{1,2}$. Continuing in this way, let $v_2,v_3,\ldots,v_n$ be the sequence of vertices such that $v_i$ is adjacent to $v_{i-1}$ for $2 \leq i \leq n$ and $d(v_n) = 1$, $d(v_i) = 2$ for $2 \leq i \leq n-1$ then $G$ is $L_{1,n}$. Let $v_n$ be the first vertex such that $d(v_n) = 3$. Since $v_n$ is the first vertex in the sequence, $v_n$ is not adjacent to any of $\{v_1.v_2,\ldots,v_{n-2},u,w,x\}$. Let $v_n$ is adjacent to $a$ and $b$. Again, note that $a$ and $b$ are not adjacent to $\{v_1,v_2,\ldots,v_{n-1},u,w,x\}$. Since $G$ is $K_{1,3}$-free, $a$ and $b$ are adjacent and hence $v_n, a$ and $b$ form a triangle. By Theorem \ref{line:L2NS}, note that $v_nab$ is a part of $K_4-e$. Let $c$ is adjacent to $a$ and $b$ but observe that it is not adjacent to $v_n$ as $G$ does not contain $K_4$. By claim-1, it is clear that $d(c) = 2$ and hence $G$ is $L_{2,n}$.

\textsf{Case-2:} $v_1$ is not adjacent to $w$. Then let $y$ be the vertex adjacent to both $w$ and $x$. By claim-1, note that $d(y) = 2$. Now, the subgraph induced by $v_1, w, x, y, u$ is isomorphic to $L_{1,2}$. Now continuing similar to case-1, it is easy to show that $G$ is either $L_{1,n}$ or $L_{2,n}$ for $n \geq 1$.

\textsf{Sufficiency:} Suppose $G$ is isomorphic to $L_{k,n}$, where $k \in \{1,2\}$ and $n \geq k$, then note that $G = L^2(H)$, where $H$ is the graph obtained by identifying a vertex of a triangle with one end vertex of path $P_n, n \geq 2$.  

(c) Suppose there exists a graph $G$ with $\Delta(G) = 3$ such that $G = L^n(H)$ for some $n \geq 3$. In this case, observe that $G$ has an induced subgraph $K$ such that $L^{-(n-2)}(K) = K_4-e$ by Theorem \ref{line:L2NS} (a), $L^2(H)$ contains $K_4-e$ as an induced subgraph. Then note that $\Delta(L(K_4-e)) = 4$ which is a contradiction with $\Delta(G) = 3$. Therefore, we obtain that for $G \neq L^4(H)$ for any graph $H$. Since every $L^n(H)=L^3(L^{n-3}(H))$, we obtain that $G \neq L^n(H)$ for $n \geq 3$.
\end{proof}

Let $G$ be a simple finite connected graph. A subgraph $H$ of a graph $G$ is called a pendant subgraph if $H$ is connected to $G-H$ by an edge only.

\begin{Theorem} Let $G$ be a connected graph with $\Delta(G) = 4$ and $G \neq G_3,G_8,G_9,H_1$, $H_2,H_3$. Then
\begin{enumerate}[\rm (a)]
\item $G = L(H)$ for some graph $H$ if and only if $G$ does not contain any of $G_1,G_2,G_4,\ldots,G_7$ as an induced subgraph and $G$ does not contain $G_3$ as a pendant subgraph.
\item $G = L^3(H)$ for some graph $H$ if and only if $G$ is isomorphic to
$L(L_{k,n})$, where $k \in \{1,2\}$ and $n \geq k$.
\item For $n \geq 4$, $G\neq L^n(H)$ for any graph $H$.
\end{enumerate}
\end{Theorem}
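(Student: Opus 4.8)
\medskip
\noindent\textbf{Proof proposal.}

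\emph{Part (a).} For necessity I would simply observe that if $G=L(H)$ then, since $G$ is not isomorphic to any of $G_8,G_9,H_1,H_2,H_3$, statement (e) of Theorem~\ref{line_forb9} shows $G$ has no induced subgraph among $G_1,\dots,G_7$; in particular $G$ has no induced $G_1,G_2,G_4,\dots,G_7$ and no induced $G_3$, hence no pendant $G_3$ (a pendant copy of $G_3$ is in particular an induced copy). For sufficiency, by the same statement it suffices to rule out an induced copy of $G_3$, all the other graphs $G_1$--$G_7$ being excluded by hypothesis. So I would assume $S\subseteq G$ is an induced subgraph with $S\cong G_3$ and derive a contradiction. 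Since $G$ is connected, $G\neq G_3$, and $G$ has no pendant $G_3$, at least two edges of $G$ join $V(S)$ to $V(G)\setminus V(S)$; and since $\Delta(G)=\Delta(G_3)=4$, every vertex of $S$ that already has degree $4$ within $S$ has no neighbour outside $S$, so these crossing edges meet $S$ only at its vertices of degree at most $3$ in $G_3$. The main work is then a finite case analysis based on the explicit picture of $G_3$ in Figure~\ref{Fig1}: one enumerates the ways to attach two or more edges at the low-degree vertices of $G_3$ --- the new endpoints may coincide or not, and may be mutually adjacent or not --- and checks that the subgraph of $G$ induced on $V(S)$ together with the new vertices always contains one of $G_1=K_{1,3},G_2,G_4,G_5,G_6,G_7$ as an induced subgraph, or has maximum degree at least $5$, or forces $G\in\{G_8,G_9,H_1,H_2,H_3\}$, each contradicting the hypotheses. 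Hence $G$ has no induced $G_3$, and $G$ is a line graph.

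\emph{Part (b).} The direction $(\Leftarrow)$ is short: if $G\cong L(L_{k,n})$ then Theorem~\ref{LD3}(b) gives $L_{k,n}=L^2(H')$ for a suitable $H'$, so $G=L(L_{k,n})=L^3(H')$. For $(\Rightarrow)$, suppose $G=L^3(H)$; since $\Delta(G)=4$ we have $G\neq K_3$, so by Theorem~\ref{whitney} the preimage is unique, and I set $X:=L^{-1}(G)=L^2(H)$, a connected line graph for which Theorem~\ref{line:L2NS}(a) guarantees that every triangle lies in an induced $K_4-e$. The key claim is $\Delta(X)=3$. First, no vertex $v$ of $X$ has degree $\ge 4$: as $X$ is a line graph it has no induced $K_{1,4}$, so two of $v$'s neighbours are adjacent and $v$ lies in a triangle $\{v,a,b\}$ with $\deg_X(a)\ge 2$; if $\deg_X(v)\ge 5$, the vertex of $G=L(X)$ corresponding to the edge $va$ has degree $\deg_X(v)+\deg_X(a)-2\ge 5>\Delta(G)$, which is impossible, while if $\deg_X(v)=4$ the same bound forces $\deg_X(a)=\deg_X(b)=2$, i.e.\ $N_X(a)=\{v,b\}$ and $N_X(b)=\{v,a\}$, so no fourth vertex is adjacent to two vertices of $\{v,a,b\}$ and this triangle lies in no induced $K_4-e$, contradicting Theorem~\ref{line:L2NS}(a). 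Second, $\Delta(X)\ge 3$, for otherwise $X$ is a path or a cycle and $G=L(X)$ would be a path or a cycle with $\Delta\le 2$. Thus $\Delta(X)=3$, and $X\neq G_2$ since $G_2$ is not a line graph, so Theorem~\ref{LD3}(b) applied to $X=L^2(H)$ gives $X\cong L_{k,n}$ and therefore $G\cong L(L_{k,n})$.

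\emph{Part (c).} Suppose $G=L^n(H)$ with $n\ge 4$. Then $G=L^3\big(L^{n-3}(H)\big)$, so part (b) yields $G\cong L(L_{k,n'})$, whence $L^{-1}(G)\cong L_{k,n'}$ by Theorem~\ref{whitney}. On the other hand $L^{-1}(G)=L^{n-1}(H)=L^3\big(L^{n-4}(H)\big)$ since $n-1\ge 3$, so $L_{k,n'}$ equals $L^3$ of some graph; as $\Delta(L_{k,n'})=3$ and $L_{k,n'}\neq G_2$, this contradicts Theorem~\ref{LD3}(c), so no such $G$ exists.

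I expect the genuine obstacle to be the finite case analysis in the sufficiency of part (a): carrying it out requires fixing the exact structure of $G_3$ from Figure~\ref{Fig1} and tracking carefully, under the constraint $\Delta=4$, how attaching two or more edges at the low-degree vertices of $G_3$ interacts both with the small forbidden graphs $G_2,G_4,\dots,G_7$ and with the exceptional graphs $G_8,G_9,H_1,H_2,H_3$. The rest is essentially bookkeeping around Theorem~\ref{LD3} and Whitney's theorem, once the bound $\Delta(L^{-1}(G))=3$ for degree-$4$ graphs of the form $L^3(H)$ is in hand.
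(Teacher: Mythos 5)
Your parts (b) and (c) are in order, but in part (a) the decisive step --- the finite case analysis ruling out an induced copy of $G_3$ --- is only announced, not carried out, and you yourself flag it as the real obstacle; as written this is the one genuine gap. It is a short check, and your setup already does most of the work: in $G_3$ (triangles $abc$ and $bcd$ sharing the edge $bc$, with $x$ adjacent to all of $a,b,c,d$) the vertices $b,c,x$ have degree $4$, so under $\Delta(G)=4$ every crossing edge must attach at one of the two degree-$3$ vertices $a,d$, and each of these can absorb at most one new edge. Since $G\neq G_3$ and a pendant $G_3$ is excluded, there are exactly two crossing edges, one at $a$ and one at $d$, leaving three subcases: a common outside neighbour of $a$ and $d$ yields an induced $G_2$; two distinct outside neighbours yield an induced $G_7$ if adjacent and an induced $G_4$ if not. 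Each contradicts the hypotheses, so no induced $G_3$ exists; none of the other escapes you anticipated (maximum degree at least $5$, or $G$ being one of the exceptional graphs) actually arises. This is precisely the paper's own argument, so filling the gap is a matter of writing out these three lines.

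For (b) your route is correct but genuinely different from the paper's, and arguably cleaner. The paper works inside $G$: from a degree-$4$ vertex it extracts an induced $E_2=L(K_4-e)$ and then reconstructs $G$ as $L(L_{k,n})$ through three structural claims about how further vertices can attach. You instead pass to the preimage $X=L^{-1}(G)=L^2(H)$ and prove $\Delta(X)=3$ using the degree formula $\deg_{L(X)}(uv)=\deg_X(u)+\deg_X(v)-2$ together with the $K_4-e$ condition of Theorem \ref{line:L2NS}(a) (legitimately applicable, since $X$ is a connected line graph and hence none of $G_8,G_9,H_1,H_2,H_3$); your handling of the $\deg_X(v)=4$ subcase, where the triangle $\{v,a,b\}$ becomes private and cannot lie in any induced $K_4-e$, is sound. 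This buys you the $\Delta=3$ classification of Theorem \ref{LD3}(b) for free instead of repeating a structural analysis one level up. Similarly, your (c), deduced from (b), Whitney's theorem and Theorem \ref{LD3}(c), is tighter than the paper's brief degree-growth remark. So (b) and (c) stand as written; (a) needs its case analysis actually executed.
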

\begin{proof} (a) \textsf{Necessity:} Suppose $G$ is a line graph with $\Delta(G) = 4$ and $G \neq G_3, G_8, G_9, H_1, H_2, H_3$. By Theorem \ref{line_forb9}, $G$ does not contain any of the $G_1,G_2,G_3$, $\ldots,G_7$. Since $G$ does not contain $G_3$, $G$ does not contain $G_3$ as a pendant subgraph.

\textsf{Sufficiency:} Suppose $G$ is a connected graph with $\Delta(G) = 4$ and $G \not\in \{G_3,G_8$, $G_9,H_1,H_2,H_3\}$ and, $G$ does not contain any of $G_1,G_2,G_4,\ldots,G_7$ and $G$ does not contain $G_3$ as a pendant subgraph. It is enough to prove that $G$ does not contain $G_3$ with two edge connections in $G-G_3$. If possible, assume that $G$ contains $G_3$ with two edges in $G-G_3$. Let $\{a,b,c,d,x\}$ induces $G_3$ in $G$ such that $abc$ and $bcd$ are two triangles and $x$ is adjacent to $a, b, c$ and $d$. Observe that $d(b) = d(c) = d(x) = 4$ and hence $b, c$ and $x$ are not adjacent to any other vertex of $G$ as $\Delta(G) = 4$. Moreover, observe that $a$ and $d$ are not adjacent; otherwise, the subgraph induced by $\{a,b,c,d,x\}$ is $K_4$ and in this case, $G$ is $K_4$ only and hence $\Delta(G) = 3$ which is a contradiction. Since $G \neq G_3$, $d(a) = d(d) = 3$ and $\Delta(G) = 4$, there exists either one vertex or two distinct vertices that are adjacent to $a$ and $d$. Then we have the following possible cases and in each case, we obtain that $G$ contains one of graph $\{G_2,G_4,G_7\}$ as an induced subgraph which is a contradiction with $G$ being a line graph:
\begin{itemize}
\item If a vertex $u$ is adjacent to both $a$ and $d$ then the subgraph induced by $\{a,b,d,x,u\}$ is $G_2$ and hence $G$ contains $G_2$ as an induced subgraph.
\item If two vertices $u$ and $v$ are adjacent to $a$ and $d$, respectively, then 
\begin{itemize}
\item if $u$ and $v$ are adjacent then the subgraph induced by $\{a,b,d,x,u,v\}$ is $G_7$ and hence $G$ contains $G_7$ as an induced subgraph.
\item if $u$ and $v$ are not adjacent then the subgraph induced by $\{a,b,d,x,u,v\}$ is $G_4$ and hence $G$ contains $G_4$ as an induced subgraph.    
\end{itemize} 
\end{itemize}
Therefore, in all above cases, our assumption that $G$ contains $G_3$ with at least two edge connections in $G$ leads to a contradiction. Hence, $G$ does not contain $G_3$ as a non-pendant subgraph.

(b) \textsf{Necessity:} Let $G$ be a connected graph with $\Delta(G) = 4$ and $G=L^3(H)$ for some connected graph $H$. Note that $L^{-i}(G),\;i=1,2$ does not contain $K_{1,3}$ as an induced subgraph as $G = L^3(H) = L(L^2(H)) = L(L(L(H)))$. Since $\Delta(G) = 4$, there exists a vertex $u$ such that $d(u) = 4$. Let $u_1,u_2,u_3$ and $u_4$ be the vertices adjacent to $u$. As $G=L(L^2(H))$, $u$ will form a triangle with two of $u_1, u_2, u_3, u_4$ as it does not contain a $K_{1,3}$ as an induced subgraph by Theorem \ref{Beineke9}. By Theorem \ref{line:L2NS}, every triangle is a part of $K_4 - e$ and by Theorem \ref{rooij}, observe that one of those triangles is even. Hence, by Theorem \ref{line:L2N}, even triangle is a part of $E_2 = L(K_4-e)$, which is an induced subgraph of $G$. Let $u_1-u_2-u_3-u_4-u_1$ be a cycle and $u$ be the central vertex of degree $4$ in the induced graph $E_2$.

\textsf{Claim-1:} If $G \neq E_2$ then there is a vertex $v_1 \in V(G) \setminus V(E_2)$ such that $v_1$ is adjacent to two consecutive vertices of cycle form by $\{u_1,u_2,u_3,u_4\}$.

Let $v_1 \in V(G) \setminus V(E_2)$. It is clear that $v_1$ is not adjacent to $u$ as $d(u) = 4$ and $\Delta(G) = 4$. We consider the following possibilities:
\begin{itemize}
\item If $v_1$ is adjacent to one vertex only, say $u_1$, then the subgraph induced by $\{u_1,v_1,u_2,u_4\}$ is $K_{1,3}$ centered at $u_1$ which is a contradiction with $G$ being a line graph.
\item If $v_1$ is adjacent to two non-consecutive vertices of the  cycle of $E_2$, then the subgraph induced by $\{v_1,u,u_1,u_2,u_3\}$ is $G_2$ which is a contradiction with $G$ being a line graph.
\item If $v_1$ is adjacent to three vertices, say $u_1,u_2,u_3$, of the cycle of $E_2$, then the subgraph induced by $\{v_1,u,u_1,u_3,u_4\}$ is $G_2$ which is a contradiction with $G$ being a line graph.
\item If $v_1$ is adjacent to all four vertices $u_1,u_2,u_3,u_4$ then $V(G) = \{v_1,u,u_1,\ldots,u_4\}$ and $G = L(K_4)$ and hence $L^{-2}(G) = K_{1,4}$ which contains $K_{1,3}$ which is a contradiction with $L^{-2}(G) = L(H)$.
\end{itemize}
Hence, $v_1 \in V(G) \setminus V(E_2)$ is adjacent to two consecutive vertices, say $u_1$ and $u_2$, of cycle induced by $\{u_1,u_2,u_3,u_4\}$ which completes the proof of claim-1.

\textsf{Claim-2:} There is no $x \in V(G) \setminus \{v_1,u,u_1,\ldots,u_4\}$ such that $x$ is adjacent to any of $\{u,u_1,\ldots,u_4\}$.

If possible, assume that $x$ is adjacent to some vertex from $\{u,u_1,\ldots,u_4\}$. Since $d(u) = d(u_1) = d(u_2) = 4$, $x$ is not adjacent to $u$, $u_1$ and $u_2$. If $x$ is adjacent to $u_3$ only, then the subgraph induced by $\{v_1,x,u_1,u_3,u_4\}$ is $G_4$ which is a contradiction with $G$ being a line graph. If $x$ is adjacent to both $u_3$ and $u_4$ then $L^{-1}(G)$ contains 
\begin{itemize}
\item $G_4$ if $x$ and $v_1$ are not adjacent and no vertex adjacent to both $x$ and $v_1$,
\item $G_2$ if $x$ and $v_1$ are adjacent and no vertex adjacent to both $x$ and $v_1$,
\item $G_7$ if another vertex $y$ is adjacent to both non-adjacent vertices $v_1$ and $x$, 
\item if a vertex $y$ is adjacent to both adjacent vertices $v_1$ and $x$, then it also has to be adjacent to all of $u_1, u_2, u_3$ and $u_4$.
\end{itemize}

Therefore, in all above cases we obtain a contradiction as $L^{-1}(G) = L(L(H))$ is a line graph and $\Delta(G)=4$. The proof of claim-2 is complete.

\textsf{Claim-3:} $v_1-v_2-\ldots-v_n (n \geq 2)$ is a path then either $d(v_n) = 1$ or $v_n$ is adjacent to two consecutive vertices of the $4$-cycle of $E_2$.

Assume that $d(v_n) \neq 1$. We prove that $v_n$ is a part of $E_2$. It is clear that $d(v_n) \leq 4$ as $\Delta(G) = 4$. Note that $v_n$ is the first vertex on the path joined to $v_1$ such that $d(v_n) \geq 3$. If possible, assume that $d(v_n) = 4$ then as mentioned in the construction earlier, there is a vertex $x$ such that $d(x) = 3$ and it appears prior to $v_n$ on a path joining $v_1$ and $v_n$ which is a contradiction. Hence, we obtain $d(v_n) \neq 4$. Assume that $d(v_n) = 3$. Let $v_n$ be adjacent to two other vertices $w_1$ and $w_2$. Since $v_n$ is the first vertex with $d(v_n) = 3$ and $G$ does not contain $K_{1,3}$, $w_1$ and $w_2$ are adjacent. Hence, $v_n,w_1$ and $w_2$ form a triangle. By Theorem \ref{line:L2N}, triangle $v_n,w_1,w_2$ is a part of $K_4-e$. Now repeating the argument as earlier again, observe that the graph attached to $v_n$ is same as one at $v_1$.  

It is clear from claims 1 to 3 that $G$ is one of the graphs $L(L_{1,n})$ or $L(L_{2,n})$, where $n \geq 1$.

\textsf{Sufficiency:} Suppose $G$ is isomorphic to $L(L_{k,n})$, where $k \in \{1,2\}$ and $n \geq k$, then note that $G = L^3(H)$, where $H$ is the graph obtained by identifying a vertex of a triangle with one end vertex of path $P_n\,\;n \geq 2$. 

(c) It is clear from (a) that if $G = L^3(H)$ and $\Delta(G) = 4$, then either $G$ is $L(L_{1,n})$ or $L(L_{2,n})$. Since $\Delta(L^{n-1}(L(L_{i,n}))) \geq 5$ for $i \in \{1,2\}$ and $n \geq 4$, $G \neq L^n(H)$ for and graph $H$.
\end{proof}

\end{document}